\documentclass[11pt,a4paper]{amsart}
\numberwithin{equation}{section}
\usepackage{rotating}
\usepackage{mathrsfs}
\usepackage{amscd,amssymb,amsopn,amsmath,amsthm,graphics,amsfonts,enumerate,verbatim,calc}
\usepackage[all]{xy}
\usepackage[utf8]{inputenc}
\usepackage{color}

\usepackage[a4paper,margin=1in]{geometry}
\usepackage[utf8]{inputenc}
\usepackage{amscd,amssymb,amsopn,amsmath,amsthm,graphics,amsfonts,enumerate,verbatim,calc}
\usepackage[T1]{fontenc}
\usepackage{lmodern}
\usepackage{enumitem}
\usepackage{hyperref}
\usepackage{microtype}\usepackage{hyperref}
\usepackage{geometry}
\usepackage{enumitem}
\usepackage{booktabs}
\usepackage{tikz-cd}
\hypersetup{colorlinks=true,linkcolor=blue,citecolor=blue,urlcolor=blue}
\newtheorem{theorem}{Theorem}[section]
\newtheorem{proposition}[theorem]{Proposition}
\newtheorem{corollary}[theorem]{Corollary}
\newtheorem{lemma}[theorem]{Lemma}
\newtheorem{fact}[theorem]{Fact}\newtheorem{claim}[theorem]{Claim}
\newtheorem{example}[theorem]{Example}\newtheorem{notation}[theorem]{Notation}
\newtheorem{observation}[theorem]{Observation}
\newtheorem{remark}[theorem]{Remark}
\newtheorem{question}[theorem]{Question}

\newtheorem{definition}[theorem]{Definition}

\newcommand{\rr}{\operatorname{r}}

\newcommand{\Spec}{\operatorname{Spec}}

\newcommand{\Gr}{\operatorname{Gr}}

\DeclareMathOperator{\edim}{edim}
\DeclareMathOperator{\len}{\ell}
\newcommand{\m}{\mathfrak m}
\newcommand{\n}{\mathfrak n}
\newcommand{\fm}{\mathfrak m}
\newcommand{\fn}{\mathfrak n}
\newcommand{\p}{\mathfrak p}
\newcommand{\q}{\mathfrak q}
\newcommand{\Zge}{\mathbb Z_{\geq 0}}

\newenvironment{PROOF}[2][\proofname.]
{\begin{proof}[#1]\renewcommand{\qedsymbol}{\textsquare$_{\rm #2}$}}
	{\end{proof}}
\begin{document}
	
\author[M. Asgharzadeh]{Mohsen Asgharzadeh}
\date{}

\address{M. Asgharzadeh}
\email{mohsenasgharzadeh@gmail.com}

	\title[Reduction numbers under flatness]
{Reduction numbers under flat extensions}

\subjclass[2020]{13A30:, 13H10}
	\keywords{localization; reduction number; Brian\c
		con--Skoda; Eakin--Sathaye; uniformity in the sense of Huneke.}

	\begin{abstract}
 The reduction number of the maximal ideal of a localization (or even faithfully flat extension) need not
 increase under specialization.   Nevertheless, for
 each fixed local ring the reduction numbers of the maximal
 ideals of all its localization  have a finite common bound. 
\end{abstract}

	\maketitle

	\section{Introduction}
	\label{sec:intro}
	
 If \(I\) is an ideal and \(J\subseteq I\),
	then \(J\) is called  a \emph{reduction} of \(I\) if
$
	I^{a+1}=JI^a$  for some $a\in\Zge.$
	The least such \(a\) is denoted by \(r_J(I)\). A minimal reduction is a
	reduction minimal under inclusion, and throughout this note
$$
		\rr(I)=\min\{r_J(I): J\text{ is a minimal reduction of }I\}.
$$
	The restriction to minimal reductions matters: allowing every
	reduction in this minimum would give zero by taking \(J=I\).
In the local setting, it controls the Hilbert--Samuel
	function, the structure of the associated graded ring, and the
	Cohen--Macaulayness of blow-up algebras. 
		There is an extensive literature on bounds for reduction numbers. In particular, Vasconcelos obtained general bounds for reduction numbers of ideals and multiplicity-dependent bounds for primary ideals. Related estimates have been developed using the Eakin–Sathaye theorem and, under additional hypotheses, the theory of Sally modules and associated graded rings. For more details, see the book  of Vasconcelos \cite{Vasconcelos}.
	A natural question, going back to
	Vasconcelos, is how this invariant behaves under localization, and
	whether it is uniformly bounded on the prime spectrum of a fixed
	ring. The present note answers both questions.
	
	\begin{question}[Vasconcelos, see {\cite[Page 83]{Vasconcelos}}]\label{q:vasconcelos}
		Let \(R\) be  local  and let \(\p\subsetneq\q\) be prime
		ideals of \(R\). 
		\begin{enumerate}[label=\textup{(\roman*)}]
			\item Is
			\(\rr(\p R_{\p})\leq \rr(\q R_{\q})\)?
			\item Can
			\(\sup\{\rr(\p R_{\p}) : \p\in\Spec(R)\}\)
			be infinite?
		\end{enumerate}
	\end{question}

	Part (i) asks whether reduction numbers are \emph{monotone} under
	specialization, i.e.\ whether passing from the larger prime \(\q\) to
	the smaller prime \(\p\) can only decrease the reduction number.
	Part (ii) asks whether, for a fixed Noetherian local ring, the
	reduction numbers of the maximal ideals of all its localizations
	admit a  common finite bound, independently of the prime.
		Recall that a positive answer to (i) gives a positive answer to (ii), and not necessarily the converse.

	Our main result is the following.
	
	\begin{theorem}\label{thm:answers}

		\begin{enumerate}[label=\textup{(\roman*)}]
			\item The implication
			\(
		``\p\subsetneq\q
		\Longrightarrow\
			\rr(\p R_{\p})\leq \rr(\q R_{\q})''
			\)
			is not the case.
			\item  
			\(
			\sup_{\p\in\Spec R}\rr(\p R_{\p})<\infty.
			\)
	
		\end{enumerate}
	\end{theorem}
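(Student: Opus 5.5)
For (i), the plan is to exhibit a local ring $R$ with primes $\p\subsetneq\q$ such that $\rr(\p R_\p)>\rr(\q R_\q)$. The simplest place to look is a ring whose maximal ideal has reduction number $0$ or $1$, but which has a non-maximal localization with larger reduction number. Recall that $\rr(\m)=0$ exactly when $R$ is regular, so we need a non-regular localization of a ring that is itself "nice". Regular local rings localize to regular ones, so the maximal ideal cannot have reduction number $0$. I therefore aim for $\rr(\q R_\q)=1$, for example $R$ Cohen--Macaulay of minimal multiplicity, with $\q=\m$. I then want a prime $\p$ where $R_\p$ has reduction number at least $2$.

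Minimal multiplicity need not pass to localizations in general. However, for Cohen--Macaulay rings, $e\le \edim-\dim+1$ is known to localize well in many cases, so I would rather drop Cohen--Macaulayness at $\m$. The idea is to use a non-CM local ring whose maximal ideal still has $\m^2=J\m$, while a localization is CM of larger multiplicity than allowed. Concretely, take $S=k[x,y,z,t]_{(x,y,z,t)}$ modulo an ideal. Glue a one-dimensional component with bad reduction number, such as a monomial curve with $\rr=2$ (e.g.\ $k[[s^3,s^4,s^5]]$ has $\rr=1$, so instead use $k[[s^4,s^5,s^{11}]]$ or similar with $\rr\ge2$, where a polynomial variable $t$ is adjoined), in a way that the $t$-direction localization sees the bad curve singularity, while at the closed point an extra embedded or linear component forces $\m^2=J\m$. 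The key computational check is to verify $\m^2=J\m$ at the maximal ideal for an explicit $J$, and that $\rr(\p R_\p)\ge 2$ because the curve singularity (over the field $k(t)$) has $\rr\ge 2$. If gluing fails, a fallback is a faithfully flat counterexample as the abstract suggests. The main obstacle is finding the explicit example; I would search among Stanley--Reisner-type or monomial rings, where both reduction numbers are computable combinatorially.

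For (ii), use uniformity. Fix $R$ with $d=\dim R$. For every prime $\p$, the ring $R_\p$ has dimension at most $d$. After passing to $R[X]_{\m R[X]}$ (a faithfully flat extension with infinite residue field, which changes neither the localizations' reduction-number bound in an essential way, and where infinite residue fields exist at every localized prime after a similar extension), minimal reductions of $\p R_\p$ are generated by $\dim R_\p\le d$ elements. The strategy is a uniform Briançon--Skoda theorem in the sense of Huneke: there is $k$ such that $\overline{I^{n+k}}\subseteq I^n$ for all ideals $I$ of $R$ and all $n$. This property localizes, so it holds with the same $k$ in every $R_\p$ (this requires $R$ reduced, excellent, and of finite type; in general I would reduce modulo nilradical and pass to the completion, with a little care). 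If $J$ is a minimal reduction of $I=\p R_\p$ with $\mu(J)\le d$, then $I^{n}\subseteq\overline{J^{n}}\subseteq J^{n-k}$. Take $n$ large relative to $d$ and $k$; the standard argument (as in Huneke's uniform bounds) then gives $I^{a+1}=JI^a$ for $a\le d+k$, say. The nilpotent and non-excellent cases are where I expect the main difficulty: uniform Briançon--Skoda needs hypotheses. As an alternative I would try the Eakin--Sathaye-type bound $\rr\le$ function of $e(R_\p)$ and $\mu(\p R_\p)$. Multiplicity and embedding dimension of localizations are bounded ($\mu(\p R_\p)\le\mu(\p)$ over finitely many generators... not uniform). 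So I would instead use that embedding dimension is upper semicontinuous, and that the multiplicity $e(R_\p)\le e(R)$-type bounds hold after completion, which gives finitely many values, hence a uniform bound.
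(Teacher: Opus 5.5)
\emph{Part (i).} You do not produce a counterexample. The ``gluing'' construction is never specified, no reduction number is computed, and you name finding the example as the main obstacle yourself, so (i) remains unproved.

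Your heuristic also points the wrong way, because the paper's example is Cohen--Macaulay of minimal multiplicity. Take $R=k[[t,x,y]]/(x^2,xy,y^2-tx)$. It is free over $D=k[[t]]$ on $1,x,y$, being $D\oplus D\,tu\oplus D\,tu^2\subseteq D[u]/(u^3)$. Since $\m^2=(t^2,tx,ty)=t\m$, one gets $\rr(\m)=1$. On the other hand $\p=(x,y)$ is the nilradical and $R_\p=R[1/t]\cong k((t))[u]/(u^3)$ is Artinian. So $0$ is its only minimal reduction, and $(u)^2\neq0=(u)^3$ gives $\rr(\p R_\p)=2$; using $u^{N+1}$ gives $N$ (Proposition~\ref{prop:largegap}).

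The missing idea is to give the generic point of a non-reduced curve a long nilpotent tail, while the factor $t$ in front of the nilpotents forces $\m^2=t\m$ at the closed point. In particular, minimal multiplicity fails to localize even for Cohen--Macaulay rings, which removes your reason for abandoning them. A faithfully flat counterexample would not help, since (i) is about localizations. Also, $\rr(\m)=0$ does not force regularity when the residue field is finite (Example~\ref{ex:residue-field-extension}).

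\emph{Part (ii).} Your ingredients are the right ones: uniform Brian\c{c}on--Skoda after completing and killing the nilradical, plus a uniform embedding-dimension bound. The decisive step is the problem. The claim that $I^n\subseteq J^{n-k}$ for all $n$ yields $I^{a+1}=JI^a$ with $a\le d+k$ ``by the standard argument'' is not argued, and no bound depending only on $d$ and $k$ can hold.

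Here is a counterexample. Let $F$ be an infinite field, $e\geq5$, and $R=F[[s^h:h\in H]]\subseteq F[[s]]$ with $H=\{0,e,e+1\}\cup[2e,\infty)$.
\begin{enumerate}[label=\textup{(\alph*)}]
\item Every nonzero proper ideal $I$ has a principal reduction $(y)$ with $v(y)\geq e$. Hence $\overline{I^{n+2}}=y^{n+2}F[[s]]\cap R\subseteq y^n s^{2e}F[[s]]\subseteq y^nR\subseteq I^n$. So $k=2$ and $d=1$ for every $e$.
\item If $\m^{r+1}=x\m^r$ with $v(x)=e$, comparing value sets gives $(r+1)M=e+rM$, where $M=H\setminus\{0\}$. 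This holds only for $r\geq e-1$, so $\rr(\m)=e-1$, which is unbounded.
\end{enumerate}

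The embedding dimension (here $e-1$) must therefore enter. The paper brings it in through a colength bound:
\begin{itemize}
\item $\n^C\subseteq Q$ (Lemma~\ref{lem:uniformC}) and $\edim R_\p\le v$ (Remark~\ref{eq:edim}) give $\ell_A(A/Q)\le\ell_A(A/\n^C)\le\binom{v+C-1}{v}$;
\item Lemma~\ref{lem:generators} with Eakin--Sathaye then gives $\rr(\n)\le d!\,\ell_A(A/Q)-1$ (Corollary~\ref{cor:localbound}).
\end{itemize}

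Your fallback via $e(R_\p)$ and $\edim R_\p$ also fails for non-Cohen--Macaulay rings. For example, $F[[x,y,z]]/((x,y)\cap(x,y,z)^n)$ has multiplicity $1$ and embedding dimension $3$, but $\rr(\m)=n-1$.

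Finally, passing to $R[X]_{\m R[X]}$ is unsafe. Reduction numbers can drop under residue field extension (Proposition~\ref{e25}), so a bound upstairs need not descend. It is also unnecessary: every non-maximal prime already has infinite residue field, and $\rr(\m)$ is a single finite number.
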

	 Without the noetherian hypothesis, \(
	\sup_{\p\in\Spec R}\rr(\p R_{\p})
	\) can be infinite,
	even when \(\dim R=1\), and every term in the supremum is finite (see  Section~\ref{sec:nonnoetherian}).
  Section~\ref{2} deals with
the following question from the exercise \cite[1.193]{Vasconcelos}  attributed to Hanes:
\begin{question}
	Let $f\colon(R,\fm)\to(S,\fn)$ be a flat local morphism. Is $\rr(\fm)\leq\rr(\fn)$?
\end{question}

 We present  progress on the monotonicity property of the
reduction number with respect to faithfully flat extensions, under additional
Cohen--Macaulay hypotheses (see Proposition~\ref{p24}).  In Proposition~\ref{fh} we
establish a positive answer under suitable graded assumptions.
In the  negative side,  see Proposition~\ref{e25}.

In
	Section~\ref{sec:counterexample} we present  
	one-dimensional Cohen--Macaulay rings with
	\(\rr(\m)=1\) and \(\rr(\p R_{\p})=N\) for every \(N\geq2\).
The proof of Theorem~\ref{thm:answers}(ii) occupies Section ~\ref{sec:uniform} and combines three ingredients. The first is
	an elementary estimate, proved in Section~\ref{sec:colength}, which
	bounds the number of generators of a power \(\n^h\) of the maximal
	ideal of a \(d\)-dimensional local ring in terms of \(h\), \(d\),
	and the colength of a parameter ideal; this estimate does not
	require the Cohen--Macaulay property. Combined with the
	Eakin--Sathaye theorem, it yields a local bound
	\(\rr(\n)\leq d!\,\ell_A(A/Q)-1\) for every parameter ideal \(Q\),
	valid whenever the residue field is infinite. The second ingredient,
	proved in Section~\ref{sec:uniform2}, is a uniform Brian\c
	con--Skoda statement for complete reduced local rings, which
	supplies a single exponent \(C\) such that \(\n^C\subseteq J\) for
	\emph{every} prime \(\p\) and \emph{every} reduction \(J\) of
	\(\p R_{\p}\). This converts the local colength bound into a
	uniform bound on the colength \(\ell_A(A/Q)\), hence on
	\(\rr(\p R_{\p})\). The third ingredient is a uniform
	embedding-dimension bound, obtained from the Cohen structure
	theorem and Lech's inequality for flat local maps, which makes the
	resulting bound depend only on the fixed ring \(R\).

		\section{Conveniences}	\label{02}
	
	All rings are commutative with identity. Unless explicitly stated
	otherwise, rings are Noetherian. For a local ring \((R,\m)\), write \(\mu_R(M)\) for the minimal number
	of generators of a finite module \(M\), and \(\ell_R(M)\) for its
	length when finite.

	\begin{remark}\label{rem:existence}
		The convention $I^{a+1}=JI^a$, taking the minimum over
		\emph{minimal} reductions, is essential to the formulation of the
		problem. With the larger class of all reductions the minimum would
		always be zero, since \(J=I\) is a reduction of \(I\), and both
		questions would trivialize. Minimal reductions exist for finitely
		generated ideals in Noetherian local rings regardless of the
		residue field; for completeness we recall the argument here. Indeed, minimal reductions exist for finitely generated ideals in Noetherian
		local rings, also when the residue field is finite. For \((S,\n)\) and an ideal \(I\), choose
		a reduction \(J\) for which
		\(s=\dim_{S/\n}((J+\n I)/\n I)\) is as small as possible. Lift a
		basis to \(y_1,\dots,y_s\in J\) and put \(Q=(y_1,\dots,y_s)\).
		Since \(J\subseteq Q+\n I\), a reduction equality for \(J\), followed
		by Nakayama's lemma applied to \(I^{a+1}/QI^a\), shows that \(Q\) is
		a reduction. Independence of the chosen initial classes gives
		\(Q\cap\n I=\n Q\). If \(K\subseteq Q\) is a reduction, minimality
		of \(s\) gives \(K+\n I=Q+\n I\), hence \(Q=K+\n Q\) and \(K=Q\) by
		Nakayama. Thus \(Q\) is minimal. In particular, the reduction number
		is a finite integer for the ideals considered in {Theorem} \ref{thm:answers}(ii).
	\end{remark}

The books of Matsumura \cite{Mat},  Huneke--Swanson \cite{HS} and Vasconcelos \cite{Vasconcelos} serve as
our standard references.
	
	\section{Dealing with an exercise}	\label{2}
	\subsection{Some positive side}	
We start with:

\begin{lemma}\label{art}
Let $f:(R,\fm)\to (S,\fn)$ be   flat and local morphism of artinian rings. Then $\rr(\fm)\leq \rr(\fn)$. The equality holds, if in addition $f$ is unramified (or at least $\fn=\fm S$).
\end{lemma}

\begin{proof}
	Let $r$ be an integer. The condition $\m^r=0$ is equivalent with $\m^r\otimes_RS=0$. But, $\m^r\otimes_RS\cong \fm^rS\subseteq \fn^r$.
Now, let $s:=\rr(\fn)$. Since the zero ideal is a minimal reduction of $\fn$, we know $\fn^{s+1}=0$. Let $r:=s+1$.	This, in turns, imply that 
	$\m^r=0$, and consequently, $\rr(\fm)\leq s=\rr(\fn)$.
	Now, suppose $\fn=\fm S$. Then $\fm^r=0$ iff $\fn^r=0$, and so  
$\rr(\fm)=\rr(\fn)$. 
\end{proof}

The second assertion is not true in the higher dimension:

\begin{example}\label{ex:residue-field-extension}
	Let
	\(
	R=\mathbb F_2[[X,Y]]/(XY(X+Y))
	\)
	and
	\(
	S=\mathbf{F}[[X,Y]]/(XY(X+Y)),
	\)
	where \(\mathbf{F}\) denotes the algebraic closure of
	\(\mathbb F_2\). Let \(\mathfrak m=(X,Y)R\) and
	\(\mathfrak n=(X,Y)S\). The natural homomorphism
	\(R\to S\) is faithfully flat,
	\(\mathfrak n=\mathfrak mS\) and 
		\(
	\rr(\mathfrak m) \neq \rr(\mathfrak n).
	\)
\end{example}
\begin{proof}	By   \cite[Example 8.3.2]{HS}, the ideal \(\mathfrak m\)
	is basic: it has no proper reductions. Consequently, \(\mathfrak m\)
	is its own minimal reduction, and, with our convention,
	\(
	\rr(\mathfrak m)=0.
	\)
	On the other hand, choose \(u\in\mathbf{F}\) with
	\(u\ne 0,1\), and put \(J=(X+uY)S\). The calculation in
\cite[Example 8.3.2]{HS} shows that
	\(
	J\mathfrak n^2=\mathfrak n^3.
	\)
	Moreover, \(J\) is a minimal reduction of \(\mathfrak n\).
	Since
	\(
\Gr_{\mathfrak n}(S)
	\cong
\mathbf{F}[X,Y]/(XY(X+Y))
	\)
	and
	\(
	\frac{\Gr_{\mathfrak n}(S)}{(X+uY)}
	\cong
	\mathbf{F}[Y]/(Y^3),
	\)
	the quotient has a nonzero degree-two component and vanishes in
	all degrees greater than two. Hence
	\(
	0<r(\mathfrak n)\leq2,
	\) as $\fn$ is not basic.
\end{proof}

\begin{remark}\label{rm}
In general, the reduction number does not behave well with respect to a
superficial element \(x\in I\); that is, the equality
$
\rr_{J/(x)}\bigl(I/(x)\bigr)=\rr_J(I)
$
need not hold. However, when \(\operatorname{depth} {\Gr}(I)\ge 1\), one has
\(
\rr_{J/(x)}\bigl(I/(x)\bigr)=\rr_J(I),
\)
see \cite[Lemma 2.14]{mar}.		
\end{remark}

\begin{proposition}\label{p24} Let \(f:(R,\mathfrak m)\to(S,\mathfrak n)\) be a flat local homomorphism of Cohen–Macaulay local rings of the same dimension, with infinite residue fields. If \(\operatorname{Gr}_{\mathfrak m}(R)\) is Cohen–Macaulay, then
	\(
	\mathrm r(\mathfrak m)\leq \mathrm r(\mathfrak n).
	\)
\end{proposition}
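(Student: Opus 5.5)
The plan is to reduce to the artinian case and then apply Lemma~\ref{art}. Put $d=\dim R=\dim S$. Flatness gives $\dim S=\dim R+\dim S/\mathfrak mS$, so the closed fibre $S/\mathfrak mS$ is artinian and $\mathfrak mS$ is $\mathfrak n$-primary.

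Since the residue field of $R$ is infinite, choose a minimal reduction $J=(x_1,\dots,x_d)$ of $\mathfrak m$ generated by a superficial sequence. Its initial forms $x_1^*,\dots,x_d^*$ then form a system of parameters of $\operatorname{Gr}_{\mathfrak m}(R)$, hence a regular sequence because $\operatorname{Gr}_{\mathfrak m}(R)$ is Cohen--Macaulay. By Valabrega--Valla (equivalently, by iterating Remark~\ref{rm}, since $\operatorname{depth}\operatorname{Gr}\geq 1$ is preserved modulo each $x_i$), we get $J\cap\mathfrak m^{n+1}=J\mathfrak m^{n}$ for all $n$. Consequently
$$r_J(\mathfrak m)=\min\{n:\ \mathfrak m^{n+1}\subseteq J\}=\rr\bigl(\mathfrak m/J\bigr),$$
where the right-hand side is computed in the artinian ring $R/J$. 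This number does not depend on the minimal reduction $J$, so it equals $\rr(\mathfrak m)$.

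Next pass to $S$. Because $x_1,\dots,x_d$ is $R$-regular and $f$ is flat, it is also $S$-regular. Moreover $JS$ is $\mathfrak n$-primary, since $\dim S/JS=\dim S/\mathfrak mS=0$. The induced map $R/J\to S/JS$ is therefore a flat local homomorphism of artinian rings, and Lemma~\ref{art} gives
$$\rr(\mathfrak m)=\rr(\mathfrak m/J)\leq \rr(\mathfrak n/JS)=\min\{t:\ \mathfrak n^{t+1}\subseteq JS\}.$$
Flatness also gives the cleaner formulation $\mathfrak m^{a}\subseteq J$ if and only if $\mathfrak m^aS\subseteq JS$, which is really what Lemma~\ref{art} encodes. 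So it suffices to prove $\mathfrak m^{s+1}S\subseteq JS$, where $s=\rr(\mathfrak n)$.

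This last inclusion is the main obstacle, because $JS$ need not be a reduction of $\mathfrak n$. Let $K$ be a minimal reduction of $\mathfrak n$ with $\mathfrak n^{s+1}=K\mathfrak n^{s}$; it exists because the residue field of $S$ is infinite. Then $\mathfrak m^{s+1}S\subseteq \mathfrak n^{s+1}=K\mathfrak n^s\subseteq K$, and since $S$ is Cohen--Macaulay, $K$ is generated by a regular sequence with $\ell(S/K)=e(\mathfrak n)$. My first attempt would be to use the freedom in choosing $J$: take the $x_i$ generic in $\mathfrak m$ and compare $JS$ with $K$ through the Cohen--Macaulay property of $S$, for instance by a length or multiplicity count. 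The aim is to show that an element of $\mathfrak m^{s+1}S$ lying in $K$ must already lie in $JS$, using that $\mathfrak m^{s+1}S=(\mathfrak mS)^{s+1}$ and that $JS$ is a reduction of $\mathfrak mS$ with $r_{JS}(\mathfrak mS)=r_J(\mathfrak m)$ by flatness.

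If that comparison fails, I would fall back on Hilbert functions. Since $\operatorname{Gr}_{\mathfrak m}(R)$ is Cohen--Macaulay, $\rr(\mathfrak m)$ equals the degree of the $h$-polynomial of $R$. I would then try to bound that degree by $\rr(\mathfrak n)$ via the flat map, which is where the equal-dimension hypothesis enters.
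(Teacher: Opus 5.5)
\textbf{There is a genuine gap: the proposal stops exactly where the content of the proposition begins.}

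Your setup is sound. Because $\Gr_{\fm}(R)$ is Cohen--Macaulay, Valabrega--Valla gives $J\cap\fm^{n+1}=J\fm^{n}$. Hence $\rr(\fm)=\min\{n:\fm^{n+1}\subseteq J\}$ for every minimal reduction $J$, and Lemma~\ref{art} applies to $R/J\to S/JS$.

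The problem is the last step. Since $S$ is faithfully flat over $R$, $JS\cap R=J$. So the inclusion $\fm^{s+1}S\subseteq JS$ you want is \emph{equivalent} to $\fm^{s+1}\subseteq J$, that is, to $\rr(\fm)\le s=\rr(\fn)$. The detour through $S/JS$ and Lemma~\ref{art} only restates the goal; no property of $\fn$ or of its reductions has yet been used.

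Neither of your two suggestions closes this.
\begin{enumerate}[label=\textup{(\alph*)}]
\item A minimal reduction $K$ of $\fn$ is not extended from $R$ and need not be comparable with $JS$. Since $\fm^{s+1}S\subseteq K$ holds automatically, the claim that ``an element of $\fm^{s+1}S$ lying in $K$ lies in $JS$'' is again the original statement.
\item The Hilbert-function argument of Proposition~\ref{fh} rests on a homogeneous $R$-basis of $S$, which gives $h_S=h_R\,H_F$. In the local case such a factorization holds only for the $\fm S$-adic filtration, where $\Gr_{\fm S}(S)\cong\Gr_{\fm}(R)\otimes_{R/\fm}S/\fm S$. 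It does not hold for the $\fn$-adic filtration, which is the one that governs $\rr(\fn)$.
\end{enumerate}

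\textbf{Comparison with the paper.} Your route is the one the paper takes. The paper bridges the gap with the single sentence ``Similarly, $\rr(\overline{\fn})\le\rr(\fn)$''. For Lemma~\ref{art} to apply, $\overline S$ must be $S/JS$, so this amounts to claiming $\fn^{s+1}\subseteq JS$. (If $\overline S$ is instead read as $S$ modulo a minimal reduction of $\fn$, the inequality holds, but then there is no map $\overline R\to\overline S$.)

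You were right not to accept this claim: it is stronger than what is needed and false in general. Take $k$ infinite, $R=k[[x]]$ and $S=k[[\tau^2,\tau^3]]$ with $x\mapsto\tau^3$.
\begin{itemize}
\item $S$ is free over $R$ on $1,\tau^2,\tau^4$, both rings are one-dimensional Cohen--Macaulay, and $\Gr_{\fm}(R)=k[X]$, so all hypotheses hold.
\item $\fn^2=\tau^2\fn$, so $\rr(\fn)=1$.
\item $S/xS$ has $k$-basis $1,\tau^2,\tau^4$, so $\rr(\fn/xS)=2$.
\item $J=\fm$ is forced here, so no clever choice of $J$ helps.
\end{itemize}

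A complete argument must therefore use that the relevant elements come from $\fm^{s+1}$, not all of $\fn^{s+1}$. It must also genuinely relate a minimal reduction of $\fn$ to the ideal $J\subseteq R$. As it stands, your argument does not prove the proposition, and the paper's sketch does not show how to finish it either.
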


\begin{proof}
	Reduction by regular sequence shows that $\rr(\overline{\fm})\leq \rr(\fm)$, where \(\overline R=R/(x_1,\ldots,x_d)\) and \(\overline{\mathfrak m}=\mathfrak m\overline R\), with \(x_1,\ldots,x_d\) a suitable superficial system of parameters for \(R\). Similarly, $\rr(\overline{\fn})\leq \rr(\fn)$.
Combining with	{Lemma} \ref{art} it is enough to show that $\rr(\overline{\fm})= \rr(\fm)$. This follows by the Cohen-Macaulay assumption of $\Gr_{\fm}(R)$ where we take reduction by $x$ so that  $x+I^2\in I/I^2=(\Gr_{\fm}(R))_1$  is $(\Gr_{\fm}(R))$-regular (see {Remark} \ref{rm}).
\end{proof}
\begin{proposition}\label{fh}
	Let \(R\to S\) be a degree-preserving flat homomorphism of standard graded Cohen--Macaulay \(k\)-algebras over an infinite field \(k\), where \(R_0=S_0=k\) and \(\dim R=\dim S\). Let \(\mathfrak m=R_+\) and \(\mathfrak n=S_+\). Then
	$
	\rr(\mathfrak m)\leq \rr(\mathfrak n).
	$
\end{proposition}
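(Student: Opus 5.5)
The plan is to express both reduction numbers through Hilbert series and compare them using flatness. The tool is the classical fact that for a standard graded Cohen--Macaulay \(k\)-algebra \(A\) of dimension \(d\) over an infinite field, with \(\mathfrak a=A_+\),
\[
\rr(\mathfrak a)=\deg h_A(t),
\]
where \(H_A(t)=h_A(t)/(1-t)^d\) is the Hilbert series. I would prove this in two steps.

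First, the linear case. Choose a linear system of parameters \(\ell_1,\dots,\ell_d\); this is possible because \(k\) is infinite. It is a regular sequence, so \(A/(\ell)\) is artinian with Hilbert series \(h_A(t)\). The top nonzero degree of \(A/(\ell)\) is therefore \(\deg h_A\), and this equals \(r_{(\ell)}(\mathfrak a)\).

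Second, independence of the minimal reduction. Since \(\Gr_{\mathfrak a}(A)\cong A\) is Cohen--Macaulay, the reduction number does not depend on the chosen minimal reduction. This is the standard consequence of Cohen--Macaulayness of the associated graded ring (Trung/Huneke--Ooishi). It can also be recovered from Remark~\ref{rm}, by cutting down successively with superficial elements that are regular on \(\Gr\), exactly as in Proposition~\ref{p24}. Either way every minimal reduction gives reduction number \(\deg h_A\). I expect this independence step to be the main obstacle, since \(\rr\) is a minimum over all minimal reductions and these need not be homogeneous. I would cite the independence theorem for Cohen--Macaulay \(\Gr\) rather than reprove it.

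Next, use flatness to relate the Hilbert series. \(S\) is a graded \(R\)-module that is flat over \(R\), and \(R\) is positively graded with \(R_0=k\) a field. By graded Nakayama, \(S\) is then graded free over \(R\): a homogeneous basis of \(V:=S/\mathfrak mS\) lifts to a free basis. Hence \(S\cong R\otimes_k V\) as graded \(R\)-modules, and
\[
H_S(t)=H_R(t)\,H_V(t).
\]
Flatness also gives \(\dim S=\dim R+\dim S/\mathfrak mS\), so \(\dim R=\dim S\) forces \(V\) to be artinian. Thus \(H_V(t)\) is a polynomial with nonnegative coefficients and constant term \(1\). It follows that
\[
h_S(t)=h_R(t)\,H_V(t).
\]

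Finally, compare degrees. The leading coefficient of \(h_R\) is positive, because \(h_R\) is the Hilbert series of the artinian ring \(R/(\ell)\). The leading coefficient of \(H_V\) is also positive. So no cancellation can occur in the product, and
\[
\deg h_S=\deg h_R+\operatorname{topdeg}(V)\geq\deg h_R.
\]
Applying the first step to both \(R\) and \(S\) then gives \(\rr(\mathfrak m)\leq\rr(\mathfrak n)\).
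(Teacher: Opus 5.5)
Your proposal is correct and follows the same route as the paper: lift a homogeneous basis of $S/\mathfrak mS$ to get $S\cong\bigoplus R(-a_i)$, deduce $h_S(t)=h_R(t)H_{S/\mathfrak mS}(t)$, compare degrees, and use the identification $\rr=\deg h$ for standard graded Cohen--Macaulay algebras. The only difference is that you justify steps the paper states without proof: graded Nakayama for the freeness, and, behind $\rr=\deg h$, the computation with a linear system of parameters together with independence of the minimal reduction when $\Gr$ is Cohen--Macaulay.
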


\begin{proof}
	Set \(F=S/\mathfrak mS\). Since \(R\to S\) is flat and the two rings have the same dimension, \(F\) is zero-dimensional. Hence \(F\) is an Artinian graded \(k\)-algebra.
	\begin{claim}\label{c28}
		One has $H_S(t)=H_R(t)H_F(t)$.
	\end{claim}
	\begin{PROOF}{\ref{c28}}
		Since \(F=S/\mathfrak mS\) is a finite-dimensional graded \(k\)-vector space, choose
		a homogeneous \(k\)-basis
		\(
		\{\overline{u}_1,\ldots,\overline{u}_N\}
		\)
		with \(\deg\overline{u}_i=a_i\). Lift each \(\overline{u}_i\) to a homogeneous
		element \(u_i\in S\) of the same degree.
		Because \(S\) is finite and flat over \(R\), these lifts form a homogeneous
		\(R\)-basis of \(S\). Thus, as graded \(R\)-modules,
		\(
		S\cong\bigoplus_{i=1}^N R(-a_i),
		\)
		where \(R(-a_i)\) denotes the graded shift of \(R\).
		Taking Hilbert series gives
		\[
		\begin{aligned}
			H_S(t)
			=\sum_{i=1}^N H_{R(-a_i)}(t) 
			=\sum_{i=1}^N t^{a_i}H_R(t) 
			=H_R(t)\sum_{i=1}^N t^{a_i}.
		\end{aligned}
		\]
		The degrees of the homogeneous basis elements give
		\(
		H_F(t)=\sum_{i=1}^N t^{a_i}.
		\)
		The claim follows.
	\end{PROOF}

	As \(R\) and \(S\) are Cohen--Macaulay of the same dimension \(d\), we can write
	\[
	H_R(t)=\frac{h_R(t)}{(1-t)^d},
	\qquad
	H_S(t)=\frac{h_S(t)}{(1-t)^d},
	\]
	where \(h_R(t)\) and \(h_S(t)\) are the respective \(h\)-polynomials. It follows that
	\(
	h_S(t)=h_R(t)H_F(t).
	\)
	Since \(H_F(t)\) is a nonzero polynomial with nonnegative coefficients, we obtain
	\(
	\deg h_S(t)\geq \deg h_R(t).	
	\)	Finally, for a standard graded Cohen--Macaulay algebra over an infinite field, the reduction number of the homogeneous maximal ideal equals the degree of its \(h\)-polynomial. Thus
	\[
	\rr(\mathfrak m)=\deg h_R(t)
	\leq \deg h_S(t)=\rr(\mathfrak n),
	\]
	as required.
\end{proof}

\subsection{A negative side}

	The following  example uses finite residue fields and non-Cohen--Macaulay rings.

\begin{proposition}\label{e25}
	Let \(k=\mathbb F_2\), \(K=\mathbb F_4\). Let also $R:=\frac{k[[x,y]]}{(x^4,\;xy(x+y))} 
	\stackrel{f}\subseteq S:=\frac{K[[x,y]]}{(x^4,\;xy(x+y))}.$ Then \(f\) is  flat and \(\mathfrak n=\mathfrak mS\). Also,
${r(\mathfrak m)=3>2=r(\mathfrak n)}.
$

\end{proposition}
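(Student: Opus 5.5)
The plan is to exploit that everything is governed by a homogeneous ideal. Put $A=k[x,y]/(x^4,\,xy(x+y))$ and $A_K=A\otimes_k K$. Then $R$ and $S$ are the completions of $A$ and $A_K$ at their homogeneous maximal ideals. Hence
\[
\Gr_{\mathfrak m}(R)\cong A,\qquad \Gr_{\mathfrak n}(S)\cong A_K .
\]

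Flatness is formal: $K$ is free over $k$, and $S\cong R\otimes_k K$ because $K$ is finite over $k$. Moreover $\mathfrak n=(x,y)S=\mathfrak mS$.

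The basic tool is this: for a linear form $\ell$ one has $\mathfrak m^{n+1}=\ell\,\mathfrak m^n$ if and only if $(A/\ell A)_{n+1}=0$. This holds because the ideal is homogeneous, so graded components compute the powers of $\mathfrak m$ exactly. The same holds over $K$.

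\textbf{Upper bounds.} Over $k$, take $J=(y)$. Then $A/yA\cong k[x]/(x^4)$, which is nonzero in degree $3$ and zero from degree $4$ on. So $J$ is a reduction with $r_J(\mathfrak m)=3$. Also $(y)$ is a minimal reduction, since it is principal and $\mathfrak m$ is not nilpotent: $\dim R=1$, as $y$ is not nilpotent.

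Over $K$, choose $u\in K\setminus\{0,1\}$ and $\ell=x+uy$. Substituting $x=uy$ turns the cubic into $u(u+1)y^3\neq0$. Hence $A_K/\ell A_K\cong K[y]/(y^3)$, so $\mathfrak n^3=\ell\,\mathfrak n^2$, and $\ell$ generates a minimal reduction. It follows that $\rr(\mathfrak m)\le 3$ and $\rr(\mathfrak n)\le 2$.

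\textbf{Lower bounds.} Let $J$ be any minimal reduction of $\mathfrak m$ (respectively $\mathfrak n$). First I show that the image of $J$ in $\mathfrak m/\mathfrak m^2$ is exactly one-dimensional.
\begin{itemize}
\item It cannot be $2$-dimensional. Otherwise $J=\mathfrak m$ by Nakayama, but $\mathfrak m$ is not a minimal reduction because $(y)\subsetneq\mathfrak m$ is a reduction.
\item It cannot be $0$. If $J\subseteq\mathfrak m^2$, then $\mathfrak m^{n+1}=J\mathfrak m^n\subseteq\mathfrak m^{n+2}$ would force $\mathfrak m^{n+1}=0$, contradicting $\dim R=1$.
\end{itemize}
So there is $a\in J$ with nonzero linear initial form $\ell$, and $J\subseteq (a)+\mathfrak m^2$. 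This gives $J\mathfrak m^c\subseteq a\,\mathfrak m^c+\mathfrak m^{c+2}$. Therefore $\mathfrak m^{c+1}=J\mathfrak m^c$ implies $\mathfrak m^{c+1}=\ell\,\mathfrak m^c+\mathfrak m^{c+2}$ in the graded sense, that is, $(A/\ell A)_{c+1}=0$.

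So it suffices to check graded components for every linear form $\ell$.
\begin{itemize}
\item Over $\mathbb F_2$ the only linear forms up to scalar are $x$, $y$ and $x+y$. The quotients are $A/xA\cong k[y]$, $A/yA\cong k[x]/(x^4)$, and $A/(x+y)A\cong k[y]/(y^4)$ (the cubic vanishes when $x=y$ in characteristic $2$). Each is nonzero in degree $3$, so $c\ge 3$ and $\rr(\mathfrak m)=3$.
\item Over $K$, the Hilbert function of $A_K$ begins $1,2,3,\dots$. Multiplication by any $\ell$ from degree $1$ to degree $2$ cannot be onto, so $(A_K/\ell A_K)_2\neq0$ for every $\ell$. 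Hence $c\ge2$ and $\rr(\mathfrak n)=2$.
\end{itemize}

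The step I expect to be most delicate is the reduction from an arbitrary minimal reduction $J$ to a single linear form. Over a finite residue field, minimal reductions need not be principal, and their generators need not be homogeneous. The argument above handles this via Nakayama and the containment $J\subseteq(a)+\mathfrak m^2$. I would double-check it carefully, in particular that the comparison of $J\mathfrak m^c$ with $\ell\,\mathfrak m^c$ modulo $\mathfrak m^{c+2}$ is legitimate. The remaining computations of the graded quotients $A/\ell A$ are routine.
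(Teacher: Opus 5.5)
Your proof is correct. The flatness, the upper bounds via $(y)$ and $(x+uy)$, and the final graded computations agree with the paper. The genuine difference is in how you control an arbitrary minimal reduction $J$ when the residue field is finite.

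\textbf{The paper's route.} The paper projects onto the discrete valuation ring $R/(x)\cong F[[y]]$. The reduction equation forces $\pi(J)=(y)$, which yields $f\in J$ with initial form $aX+bY$, $b\neq 0$. Since $G/(f^*)$ is a quotient of $F[X]/(X^4)$, $(f)$ is already a reduction, and minimality gives $J=(f)$. So the paper first proves that every minimal reduction is principal and excludes the direction $X$ a priori. Only then does it read off the reduction number from $A/(f^*)$.

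\textbf{Your route.} You use only three facts: $\mathfrak m/\mathfrak m^2$ is two-dimensional, $\mathfrak m$ is not nilpotent, and $\mathfrak m$ has a proper reduction. These force the image of $J$ in the cotangent space to be a line spanned by some $a\in J$. Then $J\subseteq Ra+\mathfrak m^2$ transfers $\mathfrak m^{c+1}=J\mathfrak m^c$ to $G_{c+1}=\ell\, G_c$, without ever needing $J$ to be principal. Principality would in fact follow from your argument by one more application of Nakayama. The direction $x$ is harmlessly disposed of, since $A/xA\cong k[y]$ never vanishes.

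\textbf{Comparison.} Your argument is more elementary and does not depend on the special DVR quotient $R/(x)$. The paper's argument buys the extra structural statement that all minimal reductions are principal with $b\neq0$. Your justification of $\mathrm r_{(\ell)}(\mathfrak m)=\max\{j:(A/\ell A)_j\neq 0\}$, via homogeneity and Nakayama, also avoids the paper's assertion that $f^*$ is a nonzerodivisor. That assertion is unnecessary, and it is actually false here: $x^3y$ is a nonzero socle element of $A$, so $A$ has depth zero and every linear form is a zero divisor.
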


\begin{proof}
	For  a coefficient field \(F\), the associated graded ring is
	\(
	A_F=F[X,Y]/(X^4,\;XY(X+Y)).
	\) Recall that \(f\) is local and finite free of rank \(2\), hence flat, with
	\(\mathfrak m=(x,y)\) and \(\mathfrak n=\mathfrak mS\). Let
	$G:={\Gr}_{\mathfrak m}(A_F)
	\cong F[X,Y]/(X^4,XY(X+Y)),
$ see \cite[Ex. 14.2(iii)]{Mat}.
	Let \(J\) be a minimal reduction of \(\mathfrak m\). If $R/ \fm$ were be infinite, then
	every minimal reduction of  is minimally generated by exactly the analytic spread of $\fm$ elements (see \cite[Proposition 8.3.7]{HS}). Here, we have no such carnality assumption. Then, we argue directly, i.e.,  there exists an integer \(r\ge 0\) such that
$
	\mathfrak m^{r+1}=J\mathfrak m^r.
$
	Consider the quotient map
$\pi:A_F\to A_F/(x)\cong F[[y]].$
	Since \(\pi(\mathfrak m)=(y)\), applying \(\pi\) to the reduction equality gives
$(y)^{r+1}=\pi(J)(y)^r.$
	Consequently, \(\pi(J)=(y)\). Hence there exists \(f\in J\) whose image modulo \(x\) has a nonzero linear term in \(y\). In particular, the initial form of \(f\) is
$
	f^*=aX+bY$ with $a,b\in F$.
\begin{claim}\label{c27}
	One has $b\neq 0$.
\end{claim}
\begin{PROOF}{\ref{c27}}	First, $f\notin(x)$. Indeed, if $f\in(x)$, then $(f)\subseteq(x)$, and a reduction
	would require $\mathfrak m^{n+1}=(f)\mathfrak m^n$ for some $n$. Reducing modulo
	$x$ would give $(y)^{n+1}=0$ in $F[[y]]$, which is impossible. Hence $f\notin(x)$,
	so the image $g(y)$ of $f$ in
	$F[[x,y]]/(x)\cong F[[y]]$
	is nonzero.
	Next, $g(y)$ must generate the maximal ideal $(y)$ of the discrete valuation ring
	$F[[y]]$. Indeed, the image of $\mathfrak m$ modulo $x$ is $(y)$, and a reduction
	of $(y)$ in a DVR is generated by an element of $y$-adic valuation exactly $1$.
	Hence $g(y)$ has a nonzero linear term.
	Write
$
	f=a'x+b'y+ \text{terms of order}\ge2 ,
$
	so that $f^*=a'X+b'Y$. Reducing modulo $x$,
$
	g(y)\equiv b'y+ \text{higher order} \pmod{x}.
$
	Since $g(y)$ has nonzero linear term, we conclude $b'\neq0$. Thus $b\neq0$ in
	$f^*=aX+bY$.
\end{PROOF}	
	Since \(b\ne0\), the relation \(aX+bY=0\) expresses \(Y\) as a scalar multiple of \(X\). Therefore
	\[
	G/(f^*)
	\cong F[X,Y]/(X^4,XY(X+Y),aX+bY)
	\]
	is finite-dimensional over \(F\), because it is a quotient of \(F[X]/(X^4)\). Thus there exists an integer \(N\ge0\) such that
$
	\bigl(G/(f^*)\bigr)_{n+1}=0$
 for all   $n\ge N,$
i.e.,
$
	G_{n+1}=f^*G_n.
$
By the definition of the associated graded ring, this implies
$
	\mathfrak m^{n+1}
	=f\mathfrak m^n+\mathfrak m^{n+2}.
$
	Set \(M=\mathfrak m^{n+1}/f\mathfrak m^n\). The preceding equality yields \(M=\mathfrak m M\). Since \(M\) is a finitely generated \(A_F\)-module and \(\mathfrak m\) is the maximal ideal, Nakayama's lemma gives \(M=0\). Hence
$
	\mathfrak m^{n+1}=f\mathfrak m^n
.$
	It follows that \((f)\) is a reduction of \(\mathfrak m\).
Finally, \(f\in J\) implies \((f)\subseteq J\). Since \(J\) is a minimal reduction and \((f)\) is itself a reduction, we conclude that
${J=(f).}$ 	This proves that every minimal reduction of \(\mathfrak m\) is principal.

\begin{claim}\label{c26}
 	\(
 	\rr_{(f)}(\mathfrak m_F) =\max\{j:(A_F/(f^*))_j\ne0\}.
 	\)
\end{claim}

\begin{PROOF}{\ref{c26}}	
 		Let $f\in\mathfrak m_F$ be a principal reduction of
 		$\mathfrak m_F$ so that $(f)$ is a minimal reduction. Write $f^*\in A_F$ for the
 		initial form of $f$ in the associated graded ring
$\operatorname{Gr}_{\mathfrak m_F}(A)=F[X,Y]/(X^4,\;XY(X+Y)).
 	$
 		For a minimal reduction $J=(f)$ of $\mathfrak m_F$, the reduction number is
 		\(
 		\rr_J(\mathfrak m_F)=\min\{n:\mathfrak m_F^{n+1}=J\,\mathfrak m_F^{\,n}\}.
 		\)
 		Passing to the associated graded ring, the condition $\mathfrak m_F^{n+1}=J\mathfrak m_F^n$
 		is equivalent to the surjectivity of the multiplication map
$\times f^*:(A_F)_{n-1}\to(A_F)_n,$
 		i.e.  to the vanishing
 		\(
 		(A_F/(f^*))_n=0.
 		\)
 		Since $f$ is a reduction, $A_F/(f^*)$ is a finite-dimensional  artinian  graded
 		$F$-algebra, so it vanishes in all sufficiently high degrees. Because $f^*$ is a
 		nonzerodivisor of degree $1$, the quotient $(A_F/(f^*))_j$ is nonzero precisely for
 		$j\le r$, where $r$ is the largest index with nonzero component. Hence
$
 		(A_F/(f^*))_j\ne0$  iff $j\le \rr_{(f)}(\mathfrak m_F).
$	
\end{PROOF}

Let $f\in\mathfrak m=(x,y)$ be such that $(f)$ is a minimal
reduction of $\mathfrak m$, and write $f^*=aX+bY$ for its initial form in
$A_F=F[X,Y]/(X^4,XY(X+Y))$.
Equivalently, $b\neq0$ says that $f$ is not contained in $(x)+\mathfrak m^2$: a
reduction of $\mathfrak m$ must detect the $y$-direction modulo $x$, and the only
way to do so at first order is to have a nonzero $Y$-coefficient in $f^*$.
Over \(\mathbb F_2\), the only possible initial forms are \(Y\) and \(X+Y\), and
$
	A_k/(Y)\cong k[X]/(X^4)\cong A_k/(X+Y) k[X]/(X^4).
$
	Consequently, \(\rr(\mathfrak m)=3\).
	Over \(K\), choose \(\alpha\notin\mathbb F_2\). Then
	\[
	A_K/(Y+\alpha X) \cong K[X]/\bigl(X^4,\alpha(1+\alpha)X^3\bigr) \cong K[X]/(X^3).
	\]
	Thus \(\rr(\mathfrak n)\le2\). Equality holds because \(A_K\) has no quadratic relations: its degree-two component has dimension \(3\), while multiplication by one linear form from its degree-one component has rank at most \(2\).
\end{proof}

\section{Something is  not monotone}
\label{sec:counterexample}
Let \(k\) be any field.
Let us compute reduction number in our  favorite non Cohen-Macaulay ring.
\begin{example}
	Let
	$
	R_0:=k[[x,y]]/(x^2,xy)$ with
	$\mathfrak m=(x,y)R_0.
	$
	Since \(x^2=xy=0\), we have
	$
	\mathfrak m^2=(y^2)=y\mathfrak m.
	$
	Thus \(J=(y)\) is a reduction of \(\mathfrak m\). Moreover, \(J\) is a minimal reduction, and
	$
	\mathrm r_J(\mathfrak m)=1,
	$
	since \(\mathfrak m\neq J\) but \(\mathfrak m^2=J\mathfrak m\). Hence
	$
	\mathrm r(\mathfrak m)=1.
	$
	Now localize at the prime ideal \(\mathfrak p=(x)\). Since \(y\notin\mathfrak p\), the element \(y\) is a unit in \((R_0)_{\mathfrak p}\). The relation \(xy=0\) therefore implies \(x=0\) in \((R_0)_{\mathfrak p}\). Consequently,
	$
	\mathfrak p(R_0)_{\mathfrak p}=(0).
	$
	Taking the zero ideal as a reduction of itself, we obtain
	$
	\mathrm r_{\mathfrak pR_{\mathfrak p}}
	(\mathfrak p(R_0)_{\mathfrak p})=0.
	$
	Thus
	$
	{
		\mathrm r(\mathfrak m)=1
		\text{ and } 
		\mathrm r_{\mathfrak pR_{\mathfrak p}}
		(\mathfrak p(R_0)_{\mathfrak p})=0.
	}
	$
\end{example}

 Our main ring in this section follows by adding a binomial relation $y^2-tx$ to  $R_0$. Namely, 
\( 
R=k[[t,x,y]]/(x^2,xy,y^2-tx)\) (and its generalization), and we want to compare the reduction number of localization at 
$\p=(x,y)$ with $\m=(t,x,y).$
 
\begin{proposition}\label{prop:small}
The ring \(R\) is a complete one-dimensional Cohen--Macaulay and local,
	\(\p\subsetneq\m\), and
	\(
	\rr(\p R_{\p})=2>1=\rr(\m).
	\)
\end{proposition}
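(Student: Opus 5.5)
The plan is to make both rings completely explicit: first describe \(R\) as a module over \(k[[t]]\), then compute the localization \(R_{\p}\), which turns out to be artinian, and finally exhibit an explicit minimal reduction of \(\m\) with reduction number \(1\).

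\emph{Structure of \(R\).} In \(R\) we have \(x^2=xy=0\) and \(y^2=tx\), hence \(y^3=txy=0\). Every monomial in \(x,y\) of degree at least two is therefore either \(0\) or equal to \(tx\).
\begin{enumerate}[label=\textup{(\arabic*)}]
\item I would show that \(R\) is a free \(k[[t]]\)-module with basis \(1,x,y\). Generation is clear from the relations above. For freeness, I would check that the defining ideal meets \(k[[t]]\oplus k[[t]]x\oplus k[[t]]y\) only in \(0\). One way is to note that the relations form a Gröbner/standard basis for a suitable order; alternatively, one can construct the multiplication table on \(k[[t]]^3\) directly and verify associativity.
\item Consequently \(R\) is module-finite over \(k[[t]]\) and \(t\) is a nonzerodivisor. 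So \(R\) is complete local of dimension \(1\), and it is Cohen--Macaulay with system of parameters \(t\).
\item Since \(R/\p\cong k[[t]]\) is a domain, \(\p=(x,y)\) is prime, and clearly \(\p\subsetneq\m\).
\end{enumerate}

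\emph{The localization \(R_{\p}\).} Here \(t\notin\p\) is a unit, so \(x=t^{-1}y^2\). Substituting, the relation \(xy=0\) becomes \(y^3=0\), and \(x^2=0\) follows. Thus
\[
R_{\p}\cong k((t))[y]/(y^3),
\]
with \(\p R_{\p}=(y)\). I would confirm this isomorphism using the \(k[[t]]\)-basis from step (1), which shows that \(1,y,y^2\) form a \(k((t))\)-basis of \(R_{\p}\); in particular \(y^2=tx\neq0\) there. Since \(\p R_{\p}\) is nilpotent, \((0)\) is a reduction of it, and it is obviously minimal. Hence \(\rr(\p R_{\p})=\min\{a:(y)^{a+1}=0\}=2\), exactly as in Lemma~\ref{art}.

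\emph{The maximal ideal \(\m\).} Using \(x^2=xy=0\) and \(y^2=tx\), we get \(\m^2=(t^2,tx,ty)=t\m\). So \(J=(t)\) is a reduction of \(\m\) with \(r_J(\m)\leq1\).
\begin{enumerate}[label=\textup{(\arabic*)}]
\item To see that \(J\) is minimal, suppose \(K\subseteq(t)\) is a reduction, and write \(K=t\mathfrak a\) for an ideal \(\mathfrak a\).
\item If \(\mathfrak a\subseteq\m\), then \(\m^{n+1}=K\m^n\subseteq t\m^{n+1}\subseteq\m^{n+2}\). Nakayama's lemma would then give \(\m^{n+1}=0\), which is impossible since \(\dim R=1\).
\item Hence \(\mathfrak a=R\) and \(K=J\), so \(J\) is a minimal reduction.
\item Moreover \(r_J(\m)\neq0\): the equality \(\m=(t)\) fails because \(x,y\notin(t)\) by the freeness in step (1).
\item This already gives \(\rr(\m)=1\), provided no minimal reduction has reduction number \(0\). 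That holds because \(r_{J'}(\m)=0\) forces \(J'=\m\), and \(\m\) is not minimal since it properly contains the reduction \((t)\).
\end{enumerate}

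The only step requiring genuine care is the freeness of \(R\) over \(k[[t]]\), in particular the facts that \(y^2=tx\neq0\) and that \(t\) is a nonzerodivisor. Everything else is a direct computation from the three relations.
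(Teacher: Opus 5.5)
Your proposal is correct and follows essentially the same route as the paper: a $k[[t]]$-basis $1,x,y$ of $R$, the identification $R_{\p}=R[1/t]\cong k((t))[y]/(y^3)$, and the minimal reduction $(t)$ with $\m^2=t\m$. The differences are cosmetic. The paper certifies freeness by embedding $R$ into $k[[t]][u]/(u^3)$ via $x\mapsto tu^2$, $y\mapsto tu$ rather than by a Gr\"obner basis, and it proves minimality of $(t)$ by cancelling $t$ rather than by your equally valid observation that $K\subseteq t\m\subseteq\m^2$ would force $\m$ to be nilpotent by Nakayama.
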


\begin{proof}
	Let \(D=k[[t]]\). The defining relations give a \(D\)-basis
	\(1,x,y\) for \(R\). For completeness, independence follows by realizing
	this algebra as the subalgebra with basis \(1,tu^2,tu\) of
	\(D[u]/(u^3)\), with \(x\mapsto tu^2\) and \(y\mapsto tu\).
	The relations reduce every power series to a \(D\)-linear combination
	of \(1,x,y\), and their images are independent in \(D[u]/(u^3)\).
	Thus \(R\) is free of rank three over \(D\), and \(t\) is a
	nonzerodivisor. The ideal \(\p\) is nilpotent and \(R/\p=D\);
	consequently \(\dim R=1\) and \(R\) is Cohen--Macaulay.
One has
$
	\m^2=(t^2,tx,ty)=t\m.
$
	The principal ideal \((t)\) is a minimal reduction. One can verify
	minimality directly: if a reduction \(J\) of \(\m\) is contained in
	\((t)\), write \(J=tI\). For a sufficiently large \(a\geq1\), the
	identities \(\m^{a+1}=t\m^a=J\m^a=tI\m^a\) imply, by
	cancellation of \(t\), that \(\m^a=I\m^a\). If \(I\) were proper,
	Nakayama's lemma would give \(\m^a=0\), contrary to \(t^a\ne0\).
	Hence \(I=R\) and \(J=(t)\). Also \(\m\ne(t)\), as is visible modulo
	\(t\). A minimal reduction with reduction number zero would have
	to equal \(\m\) itself, which is impossible since \(\m\) contains the
	proper reduction \((t)\). Therefore \(\rr(\m)=1\).
	Inverting \(t\) gives, with \(K=k((t))\),
	\[
	R[1/t]\cong K[y]/(y^3),\qquad x=t^{-1}y^2.
	\]

	\begin{claim}\label{c}
This ring is already local, and its maximal ideal contracts to
\(\p\). 	
\end{claim}

\begin{PROOF}{\ref{c}}	
Now consider the localization map \(R\to S=R[1/t]\). The contraction of \((y)\) is
$(y)S\cap R=(x,y)=\mathfrak p.
$ To see this precisely, observe that
$R/\mathfrak p\cong k[[t]]$
and localizing at \(t\) gives
$(R/\mathfrak p)[1/t]\cong k((t))=K.$
Thus the kernel of the composite map
$
R\to R[1/t]\to/(y)\cong K
$
is exactly \(\mathfrak p\). This kernel is also \((y)S\cap R\), since \((y)\) is the kernel of \(S\to S/(y)\).
Therefore, the maximal ideal \((y)\) of \(S\) contracts to \(\mathfrak p\). Since \(R_{\mathfrak p}\cong R[1/t]\) (both invert precisely the elements outside \(\mathfrak p\), up to the resulting localization), we obtain the claimed identification.	
	\end{PROOF}

Thanks to the above fact,  \(R_{\p}=R[1/t]\). In an Artinian local ring the zero
	ideal is a reduction of the nilpotent maximal ideal, so it is
	its unique minimal reduction. Here \((y)^2\ne0\) and \((y)^3=0\),
	which gives \(\rr(\p R_{\p})=2\).
\end{proof}

\begin{proposition}\label{prop:largegap}
	For every integer \(N\geq2\) there is a complete one-dimensional
	Cohen--Macaulay local ring \((R_N,\m_N)\) with a prime
	\(\p_N\subsetneq\m_N\) such that
	\[
	\rr(\m_N)=1,\qquad \rr(\p_N(R_N)_{\p_N})=N.
	\]Moreover, the failure can be made \emph{arbitrarily large} by
	varying the ring, so no inequality of the form
	\(\rr(\p R_{\p})\leq f(\rr(\q R_{\q}))\) with a fixed function \(f\)
	can hold in general.
\end{proposition}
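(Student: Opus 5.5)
Generalize Proposition~\ref{prop:small}. Put \(D=k[[t]]\) and let \(R_N\) be the \(D\)-subalgebra of \(D[u]/(u^{N+1})\) generated by \(tu,tu^2,\dots,tu^N\). Equivalently,
\[
R_N=D\oplus tuD\oplus\cdots\oplus tu^ND,
\]
since \((tu^i)(tu^j)=t\cdot tu^{i+j}\) lies in this module (and vanishes when \(i+j>N\)). Set \(x_i=tu^i\), \(\m_N=(t,x_1,\dots,x_N)\) and \(\p_N=(x_1,\dots,x_N)\). For \(N=2\) this is exactly the ring \(R\) of Proposition~\ref{prop:small}, with \(y=x_1\) and \(x=x_2\).

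\textbf{Structure of \(R_N\).} As in Proposition~\ref{prop:small}:
\begin{itemize}
\item \(R_N\) is free of rank \(N+1\) over \(D\), hence module-finite over \(k[[t]]\), complete and local.
\item \(t\) is a nonzerodivisor, so \(R_N\) is one-dimensional Cohen--Macaulay.
\item \(\p_N\) is nilpotent with \(R_N/\p_N\cong D\), so \(\p_N\) is prime and \(\p_N\subsetneq\m_N\).
\end{itemize}

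\textbf{Computing \(\rr(\m_N)\).} Since \(x_ix_j=tx_{i+j}\) (read as \(0\) if \(i+j>N\)), we get \(\m_N^2=t\m_N\). So \((t)\) is a reduction with \(r_{(t)}(\m_N)\le 1\). The argument of Proposition~\ref{prop:small} shows \((t)\) is a minimal reduction: cancel \(t\), then apply Nakayama. Also \(\m_N\neq(t)\), as one sees modulo \(t\).

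We must still exclude any minimal reduction with reduction number \(0\). Such a reduction would have to equal \(\m_N\) itself. That is impossible, since \(\m_N\) properly contains the reduction \((t)\) and so is not a minimal reduction. Hence \(\rr(\m_N)=1\).

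\textbf{Computing \(\rr(\p_N(R_N)_{\p_N})\).} Inverting \(t\) gives
\[
R_N[1/t]=K[u]/(u^{N+1}),\qquad K=k((t)),
\]
because \(u^i=t^{-1}x_i\). This ring is Artinian local with maximal ideal \((u)\). Arguing as in Claim~\ref{c}, the ideal \((u)\) contracts to \(\p_N\): the composite \(R_N\to K\) has kernel \(\p_N\). Every element outside \(\p_N\) has the form \(t^a\cdot(\text{unit})\) modulo \(\p_N\), hence becomes a unit in \(R_N[1/t]\). Therefore \((R_N)_{\p_N}=R_N[1/t]\).

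In this Artinian local ring the zero ideal is the unique minimal reduction of the maximal ideal \((u)\). So the reduction number is the least \(a\) with \((u)^{a+1}=0\), namely \(N\), because \(u^N\neq0\) and \(u^{N+1}=0\).

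\textbf{The "moreover" clause.} Take \(\q=\m_N\). Any inequality \(\rr(\p R_\p)\le f(\rr(\q R_\q))\) would force \(N\le f(1)\) for every \(N\), which is absurd.

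\textbf{Main obstacle.} The delicate step is identifying the localization: showing \(R_N\setminus\p_N\) consists of elements that become units after inverting \(t\). This uses that \(\p_N\) is nilpotent, since an element \(t^a v+n\) with \(v\) a unit of \(D\) and \(n\) nilpotent is a unit in \(R_N[1/t]\). The freeness and independence of the generators is routine once we embed \(R_N\) in \(D[u]/(u^{N+1})\).
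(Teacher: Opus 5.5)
Your proposal is correct and takes essentially the same route as the paper's proof. You use the same subring $D\oplus\bigoplus_{i=1}^N D\,tu^i$ of $D[u]/(u^{N+1})$, the identity $\m_N^2=t\m_N$ with the cancel-$t$-then-Nakayama minimality argument, and the identification $(R_N)_{\p_N}=R_N[1/t]\cong k((t))[u]/(u^{N+1})$. Beyond the paper's terser version, you spell out why every element outside $\p_N$ becomes a unit after inverting $t$ (it is $t^a v$ plus a nilpotent), and you make the ``moreover'' clause explicit via the bound $N\le f(1)$.
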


\begin{proof}Let \(D:=k[[t]]\).
	Inside \(D[u]/(u^{N+1})\),  take the subring
$
R:=	R_N:=D\cdot1\;\oplus\;\bigoplus_{i=1}^N D\cdot(tu^i).
$
	It is finite free over \(D\), hence Noetherian, and
	\(\p:=\p_N:=\bigoplus_{i=1}^ND\cdot(tu^i)\) is nilpotent with quotient
	\(D\). Its unique maximal ideal is \(\m =tD+\p \). 	Thus \(\mathfrak m \) is generated by
	$\{
	t,\ tu,\ tu^2,\ \dots,\ tu^N.
	\}$
	Squaring, the generators of \(\mathfrak m ^2\) are all products of two of these:
	\[
	t^2,\qquad t\cdot tu^i=t^2u^i\ (1\le i\le N),\qquad
	(tu^i)(tu^j)=t^2u^{i+j},
	\]
	and \(u^{i+j}=0\) whenever \(i+j\ge N+1\). Hence
	\(
	\mathfrak m ^2=(t^2,\ t^2u,\ t^2u^2,\ \dots,\ t^2u^N).
	\)
	On the other hand
	\[
	t\mathfrak m =t\bigl(tD+\mathfrak p \bigr)
	=t^2D+\bigoplus_{i=1}^{N}D(t^2u^i)
	=(t^2,\ t^2u,\ t^2u^2,\ \dots,\ t^2u^N).
	\]
	Therefore
	\(
{\mathfrak m ^2=t\mathfrak m .}
	\) The \(t\)-adic and \(\m \)-adic topologies agree,
	so \(R \) is complete; freeness over \(D\) makes it Cohen--Macaulay
	of dimension one. The same minimality argument as in
	Proposition~\ref{prop:small} gives \(\rr(\m )=1\). Finally,
$
	(R )_{\p }=R [1/t]\cong k((t))[u]/(u^{N+1}),
$
	whose maximal ideal has reduction number \(N\).
\end{proof}

	\begin{remark}
i) Proposition~\ref{prop:largegap} generalizes Proposition~\ref{prop:small}: the latter is the case \(N=2\). Indeed, with \(x\mapsto tu^2\), \(y\mapsto tu\),
		\[
		R=k[[t,x,y]]/(x^2,xy,y^2-tx)\;\cong\;k[[t]]\cdot 1\oplus k[[t]](tu^2)\oplus k[[t]](tu)
		=D\cdot 1\oplus\bigoplus_{i=1}^{2}D(tu^i)\subseteq D[u]/(u^3),
		\]
		which is exactly \(R_N\) for \(N=2\).

		ii) In both cases \(\mathfrak p\)  is the nilpotent ideal with \(R/\mathfrak p\cong D\), \(\rr(\mathfrak m)=1\), and after inverting \(t\),
		\[
		R[1/t]\cong K[u]/(u^3),\qquad (R_N)[1/t]\cong K[u]/(u^{N+1}),
		\]
		with reduction number \(2\), resp.\ \(N\), at the maximal ideal \((u)\). The pair shows that the gap \(\rr(\mathfrak p R_{\mathfrak p})-\rr(\mathfrak m)\) is \emph{unbounded}.

iii) The rings we constructed were not  be reduced.
\end{remark}

\begin{observation}
If $(R,\fm)$ is 1-dimensional reduced ring, then for any	\(\p\subset\m\),  
\(
\rr(\p R_{\p})\leq \rr(\m).
\) 
\end{observation}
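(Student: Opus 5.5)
The plan is to reduce to the case $\p$ minimal and observe that the localization is then a field. Since $\dim R=1$ and $\p\subsetneq\m$, the prime $\p$ has height $0$, so $\p$ is a minimal prime of $R$. The ring $R_\p$ is a localization of a reduced ring, hence reduced, and it is zero-dimensional because $\p$ is minimal. A reduced Artinian local ring is a field, so $R_\p$ is a field and $\p R_\p=0$.

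Next I compute the left-hand side. For the ideal $(0)$ in $R_\p$, the only reduction is $(0)$ itself, and $(0)^{1}=(0)\cdot(0)^{0}$ already holds with $a=0$. Hence $\rr(\p R_\p)=0$. On the right-hand side, $\rr(\m)\geq 0$ by definition, since it is a minimum of nonnegative integers. Remark~\ref{rem:existence} guarantees that minimal reductions exist, so $\rr(\m)$ is a genuine integer. The inequality $0=\rr(\p R_\p)\le\rr(\m)$ follows.

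The case $\p=\m$ is excluded by the strict inclusion. If one wanted $\p\subseteq\m$, that case would be trivial anyway.

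The only point needing care is the convention $(0)^0=R_\p$, so that $a=0$ is allowed. This is consistent with the paper's treatment of artinian rings in Lemma~\ref{art} and in the example with $\p(R_0)_\p=(0)$. Reducedness is essential here: Propositions~\ref{prop:small} and~\ref{prop:largegap} show that without it the nilpotents survive in $R_\p$ and the inequality fails. No real obstacle is expected.
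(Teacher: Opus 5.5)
Your proof is correct and follows the same route as the paper. Both arguments show that $\p$ is a minimal prime, so $R_{\p}$ is a reduced zero-dimensional local ring and hence a field (the paper phrases this via Serre's condition $(R_0)$), which gives $\p R_{\p}=0$ and $\rr(\p R_{\p})=0\leq\rr(\m)$. You also spell out why $\p$ has height zero and why the zero ideal has reduction number $0$, points the paper leaves implicit.
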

\begin{proof}We may assume \(\p\subsetneq\m\).
	Since $R$ is reduced it satisfies Serre's condition $(R_0).$ But, $\dim(R)=1$, it follows that $R_{\p}$ is a field and so $\p R_{\p}=0$. Consequently, 
	\(
	\rr(\p R_{\p})\leq \rr(\m).
	\)
\end{proof}
\section{A finite bound for every fixed local ring}
\label{sec:uniform}
\subsection{A bound from parameter colength}
\label{sec:colength}

The next estimate works without a Cohen--Macaulay assumption (compare with \cite[Theorem 2.44]{Vasconcelos}).

\begin{lemma}\label{lem:generators}
	Let \((A,\n)\) have dimension \(d\geq1\), and let
	\(Q=(x_1,\dots,x_d)\) be a parameter ideal. Set
	\(L=\ell_A(A/Q)\). For every integer \(h\geq1\),
$
		\mu_A(\n^h)\leq h^{d-1}L+d-1.
$
\end{lemma}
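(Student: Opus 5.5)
The plan is to induct on \(d\), with the one-dimensional case as the base, peeling off one parameter at a time. Throughout, \(\mu_A(\n^h)=\ell_A(\n^h/\n^{h+1})\) is bounded by the length of any quotient \(\n^h/N\) with \(N\subseteq\n^{h+1}\), and by sums of generator counts of pieces that together generate \(\n^h\).

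\emph{Base case \(d=1\).} Here \(Q=(x)\) and \(L=\ell_A(A/xA)\). Since \(x\n^h\subseteq\n^{h+1}\), we have \(\mu_A(\n^h)\le\ell_A(\n^h/x\n^h)\). Multiplication by \(x\) gives an isomorphism \(A/(0:_Ax)\cong xA\), under which \(\n^h+(0:_Ax)\) corresponds to \(x\n^h\). Hence
\[
\ell_A(xA/x\n^h)=\ell_A(A/\n^h)-\ell_A\bigl((\n^h+(0:_Ax))/\n^h\bigr).
\]
Computing \(\ell_A(A/x\n^h)\) in two ways, through \(\n^h\) and through \(xA\), then gives
\[
\ell_A(\n^h/x\n^h)=L-\ell_A\bigl((\n^h+(0:_Ax))/\n^h\bigr)\le L .
\]
This is exactly the claimed bound \(h^{0}L+0\). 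No Cohen--Macaulay hypothesis is used; the correction term \(0:_Ax\) only helps.

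\emph{Inductive step.} Put \(\overline A=A/x_1A\). It has dimension \(d-1\), parameter ideal \(\overline Q=(\bar x_2,\dots,\bar x_d)\), and \(\ell(\overline A/\overline Q)=L\). Extend \(x_1\) to a minimal generating set \(x_1,y_2,\dots,y_e\) of \(\n\). Then
\[
\n^h=x_1\n^{h-1}+(y_2,\dots,y_e)^h .
\]
I would then seek a recursion of the shape
\[
\mu_A(\n^h)\le\mu_A(\n^{h-1})+\mu_{\overline A}(\bar\n^h)+(\text{bounded error}).
\]
Iterating, with \(\mu_A(\n^0)=1\), gives
\[
\mu_A(\n^h)\le 1+\sum_{j=1}^{h}\mu_{\overline A}(\bar\n^{j}).
\]
By induction, \(\mu_{\overline A}(\bar\n^{j})\le j^{d-2}L+d-2\), so the sum is at most \(h\cdot h^{d-2}L+h(d-2)\). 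This has the right leading term \(h^{d-1}L\), but the additive constant comes out as roughly \(h(d-2)\) instead of \(d-1\). To repair this I would run the same induction on the stronger statement \(\mu_A(\n^h)\le\ell_A(\n^h/Q\n^{h-1})+(d-1)\), or compare with \(\ell_A(A/Q^h)\)-type bounds. I would also absorb the constants using \(\mu\le\) a length that is additive along the filtration \(\n^h\supseteq x_1\n^{h-1}+\n^{h+1}\supseteq\n^{h+1}\), rather than adding generator counts.

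\emph{Main obstacle.} The difficulty is that \(x_1\) need not be a nonzerodivisor. The kernel of \(\n^h/x_1\n^{h-1}\to\bar\n^h\) is \((\n^h\cap x_1A)/x_1\n^{h-1}\), and this need not vanish. To control it, I would first pass to \(A(X)=A[X]_{\n A[X]}\), which is flat with \(\n A(X)\) as maximal ideal, so neither \(\mu\) nor \(L\) changes and the residue field becomes infinite. I would then replace \(Q\) by a general parameter ideal that is superficial for \(\n\); I still need to check that this is possible without increasing the colength \(L\). For such an element \(x_1\), Artin--Rees gives \(\n^h\cap x_1A=x_1\n^{h-1}\) modulo \(0:_A x_1\), for \(h\) large. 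The base-case computation shows that contributions from \(0:_Ax_1\) only decrease lengths. I expect the hardest point to be handling small \(h\) while keeping the additive constant down to \(d-1\). If the superficial-element route fails there, the fallback is to prove the length form of the inequality directly, namely \(\ell(\n^h/\n^{h+1})\le\ell(\n^h/Q\n^{h-1})\) plus a count of how \(Q\n^{h-1}\) sits modulo \(\n^{h+1}\), via the Koszul-type surjection \((A/\n^{h-1})^{\binom{h+d-2}{d-1}}\)-style estimate \(\ell(Q^{h-1}/Q^{h})\le\binom{h+d-2}{d-1}L\le h^{d-1}L\).
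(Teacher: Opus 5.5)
Your base case is correct and matches the paper's one-dimensional argument: since $(\n^h+(0:_Ax))/\n^h\cong(0:_Ax)/(0:_{\n^h}x)$, your identity $\ell_A(\n^h/x\n^h)=L-\ell_A\bigl((\n^h+(0:_Ax))/\n^h\bigr)$ is exactly the paper's snake-lemma identity. For $d\geq2$, however, you do not have a proof.

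Cutting by $x_1$ leaves the error term $\ell_A\bigl((\n^h\cap x_1A+\n^{h+1})/(x_1\n^{h-1}+\n^{h+1})\bigr)$ in your recursion, and nothing you propose controls it for all $h\geq1$. Superficiality plus Artin--Rees gives $\n^h\cap x_1A=x_1\n^{h-1}$ (modulo $0:_Ax_1$) only for $h\gg0$. You also have not shown that the given, arbitrary $Q$ can be replaced by a superficial parameter ideal of colength at most $L$. Even with zero error, as you note yourself, the iteration gives the additive constant $1+h(d-2)$ rather than $d-1$.

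Your suggested repairs do not close this. The strengthening $\mu_A(\n^h)\leq\ell_A(\n^h/Q\n^{h-1})+d-1$ is false: for $A$ regular and $Q=\n$ its right side is $d-1$, while $\mu_A(\n^h)=\binom{h+d-1}{d-1}\geq d$. The fallback through $\ell_A(Q^{h-1}/Q^h)$ is never linked to $\mu_A(\n^h)$.

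The missing idea is that no hyperplane section and no induction are needed; the constant $d-1$ is the hint. Since $x_1^h,\dots,x_{d-1}^h\in\n^h$, the ring $B=A/(x_1^h,\dots,x_{d-1}^h)$ satisfies $\mu_A(\n^h)\leq\mu_B(\n^hB)+d-1$: lift generators of $\n^hB$ and adjoin these $d-1$ elements. Now $B$ is one-dimensional with parameter $x_d$. Your base case never required $x_d$ to be a nonzerodivisor, so it gives $\mu_B(\n^hB)\leq\ell_A\bigl(A/(x_1^h,\dots,x_{d-1}^h,x_d)\bigr)$.

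Finally, whenever $\ell(C/xC)<\infty$, the chain $C\supseteq xC\supseteq\cdots\supseteq x^hC$ has $h$ factors, each a homomorphic image of $C/xC$, so $\ell(C/x^hC)\leq h\,\ell(C/xC)$. Raising the exponents of $x_1,\dots,x_{d-1}$ one at a time yields $\ell_A\bigl(A/(x_1^h,\dots,x_{d-1}^h,x_d)\bigr)\leq h^{d-1}L$. This is the paper's proof. It needs no superficial elements, no passage to $A(X)$, and no restriction on $h$.
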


\begin{proof}
	First suppose \(d=1\). If \(x=x_1\) and \(I\) is any \(\n\)-primary
	ideal, and
	\[
	0\longrightarrow I\longrightarrow A\longrightarrow A/I\longrightarrow 0
	\]
	a short exact sequence of  \(A\)-modules. Apply multiplication by \(x\) to each term:
	\[
	\begin{array}{ccccccccc}
		0 & \to & I & \to & A & \to & A/I & \to & 0\\[2pt]
		& & \downarrow{\scriptstyle x} & & \downarrow{\scriptstyle x} & & \downarrow{\scriptstyle x} & & \\[2pt]
		0 & \to & I & \to & A & \to & A/I & \to & 0
	\end{array}
	\]
	The snake lemma gives the exact \emph{kernel--cokernel sequence}
	\[
	0\longrightarrow (0:_I x)\longrightarrow (0:_A x)\longrightarrow (0:_{A/I}x)
	\longrightarrow I/xI\longrightarrow A/xA\longrightarrow (A/I)/x(A/I)\longrightarrow 0.
	\]
	Set \(M=A/I\). For the endomorphism \(x:M\to M\) of the finite-length module \(M\),
\[
\len(M)=\len(\ker x)+\len(\operatorname{im}x)
=\len\bigl((0:_M x)\bigr)+\len(M)-\len\bigl(M/xM\bigr),
\]
hence
$
\len\bigl((0:_M x)\bigr)=\len\bigl(M/xM\bigr).
$
Thus the kernel term \(\len\bigl((0:_{A/I}x)\bigr)\) and the cokernel term
\(\len\bigl((A/I)/x(A/I)\bigr)\) are equal and cancel in the length identity. In order to all terms be of finite length, we rewrite:
	
	\[
0\longrightarrow (0:_A x)/(0:_I x)\longrightarrow (0:_{A/I}x)
\longrightarrow I/xI\longrightarrow A/xA\longrightarrow (A/I)/x(A/I)\longrightarrow 0.
\]	
Taking lengths along this exact sequence,
	\[
	\len(I/xI)-\len(A/xA)+\len\!\bigl((A/I)/x(A/I)\bigr)
	-\len\!\bigl((0:_{A/I}x)\bigr)+\len\!\bigl((0:_A x)\bigr)-\len\!\bigl((0:_I x)\bigr)=0.
	\]
This gives
\[
\ell_A(I/xI)
=\ell_A(A/xA)
-\ell_A\bigl((0:_A x)/(0:_I x)\bigr)
\leq\ell_A(A/xA).
\]
	Since \(xI\subseteq\n I\), it follows that
	\(\mu_A(I)\leq\ell_A(I/xI)\leq L\).
	Taking \(I=\n^h\) proves the assertion for \(d=1\).
	For \(d>1\), we look at
$B:=A/(x_1^h,\dots,x_{d-1}^h)$ and $ I=\n^h.$
	Then \(\dim B=1\), \(x_d\) is a parameter of \(B\), and
	\((x_1^h,\dots,x_{d-1}^h)\subseteq I\). Lifting generators from
	the image \(IB\) and adjoining these \(d-1\) elements gives
$$\mu_A(I) \leq\mu_B(IB)+d-1 
		 \leq\ell_A\bigl(A/(x_1^h,\dots,x_{d-1}^h,x_d)\bigr)+d-1.
$$
	To bound the last length, we increase the exponents of \(x_1,\ldots,x_{d-1}\) one at a time. The key observation is that, for any finite-length \(A\)-module \(C\) and any \(x\in A\), the filtration
	\[
	C\supseteq xC\supseteq x^2C\supseteq\cdots\supseteq x^hC
	\]
	has \(h\) successive factors \(x^jC/x^{j+1}C\), for \(0\le j<h\). Each factor is a homomorphic image of \(C/xC\), so
$\ell_A(C/x^hC)\le h\,\ell_A(C/xC).$
	Applying this inequality successively to \(x_1,\ldots,x_{d-1}\), we obtain
	\[
	\ell_A\bigl(A/(x_1^h,\ldots,x_{d-1}^h,x_d)\bigr)
	\le h^{d-1}\ell_A(A/Q)=h^{d-1}L.
	\]
	Consequently, \(\mu_A(\mathfrak n^h)\le h^{d-1}L+d-1\), as desired.
\end{proof}

We use the following form of the Eakin--Sathaye theorem
\cite[Theorem 8.6.8]{HS}:
\begin{fact}\label{eq:ES}
If \((A,\n)\) has infinite residue field and
$\mu_A(I^h)<\binom{h+d}{d},$ then there exist \(a_1,\dots,a_d\in I\) such that
 	\(I^h=(a_1,\dots,a_d)I^{h-1}\). For an \(\n\)-primary ideal in
 	dimension \(d\), this is a minimal reduction. Here and below
 	\(d\geq1\) in applications of this statement.
\end{fact}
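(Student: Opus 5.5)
The first assertion is the Eakin--Sathaye theorem itself. I would not reprove it in full; I would recall the standard argument of \cite[Theorem 8.6.8]{HS} and then handle the second assertion, which is a statement about minimality, separately.

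\emph{First assertion.} Write $I=(y_1,\dots,y_n)$ and work in the fiber cone $F=\bigoplus_j I^j/\n I^j$ over the infinite field $k=A/\n$. The goal is to find $d$ general $k$-linear combinations $a_1,\dots,a_d$ of the $y_i$ with
$$I^h=(a_1,\dots,a_d)I^{h-1}+\n I^h.$$
Once this holds, Nakayama's lemma applied to $I^h/(a_1,\dots,a_d)I^{h-1}$ gives $I^h=(a_1,\dots,a_d)I^{h-1}$.

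To get the displayed equality, I would argue by induction on $d$ and use a counting argument.
\begin{enumerate}[label=\textup{(\arabic*)}]
\item Choose $d+1$ general linear forms $a_0,\dots,a_d$ in $F_1$.
\item There are $\binom{h+d}{d}$ monomials of degree $h$ in $a_0,\dots,a_d$. This exceeds $\dim_k F_h=\mu_A(I^h)$, so these monomials satisfy a nontrivial $k$-linear relation in $F_h$.
\item After a generic linear change of coordinates, which uses that $k$ is infinite, the relation can be arranged to contain $a_0^h$ with nonzero coefficient. Hence $a_0^h\in(a_1,\dots,a_d)F_{h-1}$.
\item The choice of $a_0$ ranges over a nonempty Zariski open set. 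Therefore the $h$-th powers, and more carefully the products of $h$ general forms (to avoid characteristic issues with polarization), of a dense set of elements of $F_1$ lie in $(a_1,\dots,a_d)F_{h-1}$.
\item Such products span $F_h$, which gives the displayed equality modulo $\n I^h$.
\end{enumerate}

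The main obstacle is step (3): showing that the relation can be chosen with a unit coefficient on a pure power. I would follow the combinatorial lemma of Eakin--Sathaye, which orders monomials lexicographically and uses the induction on $d$ applied to the quotient by $a_0$.

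\emph{Second assertion.} Let $I$ be $\n$-primary in $\dim A=d$ and let $J=(a_1,\dots,a_d)$. From $I^h=JI^{h-1}$, the ideal $J$ is a reduction of $I$, so its analytic spread is $\ell(J)=\ell(I)$. For $\n$-primary $I$ with infinite residue field we have $\ell(I)=d$. Every reduction of $I$ then needs at least $\ell(I)=d$ generators, and by \cite[Proposition 8.3.7]{HS} a reduction generated by $\ell(I)$ elements is a minimal reduction. Hence $J$ is a minimal reduction, and its reduction number is at most $h-1$.
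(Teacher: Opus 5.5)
The paper gives no argument for this Fact. The first assertion is quoted from \cite[Theorem 8.6.8]{HS}, and the minimality clause is simply asserted. Your justification of the second assertion is correct and fills that gap: $J=(a_1,\dots,a_d)$ is a reduction of $I$, its analytic spread equals that of $I$, namely $d$, and a reduction generated by $d$ elements is minimal. For the last point you need not rely on \cite[Proposition 8.3.7]{HS}. The fiber cone $\bigoplus_j J^j/\n J^j$ is a $d$-dimensional standard graded $k$-algebra generated by $d$ linear forms, hence a polynomial ring. So if a reduction $K$ of $I$ lies in $J$, then $K$ is a reduction of $J$. Its image in degree one must then generate an ideal primary to the irrelevant ideal, hence span $J/\n J$, and $K=J$ by Nakayama.

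Your outline for the Eakin--Sathaye theorem itself does not prove it, and the difficulty is not where you place it.
\begin{itemize}
\item Step (3) is immediate. If $P\neq0$ is a degree-$h$ form with $P(a_0,\dots,a_d)=0$ in $F_h$, write $a_i=\sum_j g_{ij}a'_j$ with $g\in GL_{d+1}(k)$. The coefficient of $(a'_0)^h$ is then $P(g_{00},\dots,g_{d0})$, which is nonzero for general $g$ since $k$ is infinite.
\item Step (4) needs a short argument. For fixed $a_1,\dots,a_d$, the set of $c$ with $c^h\in(a_1,\dots,a_d)F_{h-1}$ is closed, not open. You get all of $F_1$ for $a_1,\dots,a_d$ in the open set where $F_{h-1}^{\,d}\to F_h$, $(u_i)\mapsto\sum_i a_iu_i$, has maximal rank: there the incidence condition is closed, and step (3) shows it holds on a dense set of tuples.
\item The genuine failure is step (5). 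Your count only ever produces pure $h$-th powers, and these need not span $F_h$ when $\operatorname{char}k=p\le h$. For example, if $\operatorname{char}k=2$ and $F=k[y_1,y_2]/(y_1^2,y_2^2)$, every square in $F_2$ is zero while $F_2=k\,y_1y_2\neq0$.
\item Nothing in steps (2)--(3) yields your ``products of $h$ general forms''. A relation among monomials in $a_0,\dots,a_d$ says nothing about $c_1\cdots c_h$ for independent $c_i$.
\end{itemize}

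So, with the fix in step (4), your outline is a complete and short proof when $h!$ is a unit in $k$, by polarization. In general, the characteristic-free content is exactly what you defer to the Eakin--Sathaye combinatorics. This matters here, because the paper applies the Fact with $I=\n$, with $h\geq d!\,\ell_A(A/Q)$ large, and with residue fields $\kappa(\p)$ of arbitrary characteristic. Your first assertion therefore rests on the citation, just as the paper's does. You should either cite the theorem outright or state the range in which your sketch is valid.
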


\begin{corollary}\label{cor:localbound}
	If \((A,\n)\) has infinite residue field and dimension \(d\geq1\),
	then for every parameter ideal \(Q\),
	\(
	\rr(\n)\leq d!\,\ell_A(A/Q)-1.
	\)
	More generally, any integer \(h\geq d!\,\ell_A(A/Q)\) satisfies
	\(\rr(\n)\leq h-1\).
\end{corollary}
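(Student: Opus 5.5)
The plan is to combine Lemma~\ref{lem:generators} with the Eakin--Sathaye statement in Fact~\ref{eq:ES}, applied to \(I=\n\). Fix a parameter ideal \(Q\), set \(L=\ell_A(A/Q)\), and let \(h\geq d!\,L\). By Lemma~\ref{lem:generators}, \(\mu_A(\n^h)\leq h^{d-1}L+d-1\). It then suffices to verify
\[
h^{d-1}L+d-1<\binom{h+d}{d}.
\]
Once this holds, Fact~\ref{eq:ES} produces \(a_1,\dots,a_d\in\n\) with \(\n^h=(a_1,\dots,a_d)\n^{h-1}\). Since \(\n\) is \(\n\)-primary in dimension \(d\), \(J=(a_1,\dots,a_d)\) is a minimal reduction, and the equality says exactly \(r_J(\n)\leq h-1\). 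Hence \(\rr(\n)\leq h-1\), and taking \(h=d!\,L\) gives the first assertion.

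The only real work is the binomial inequality. I would use
\[
\binom{h+d}{d}=\frac{(h+1)(h+2)\cdots(h+d)}{d!}.
\]
Expanding the product, it is at least
\[
\frac{h^d+\bigl(\tfrac{d(d+1)}{2}\bigr)h^{d-1}+\cdots+d!}{d!}.
\]
Using \(h^d\geq d!\,L\,h^{d-1}\), the leading term alone gives \(h^d/d!\geq Lh^{d-1}\). So it remains to show that the remaining terms, divided by \(d!\), strictly exceed \(d-1\).

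For \(d=1\) the inequality reads \(L<h+1\), which holds since \(h\geq L\). For \(d\geq2\), the constant term contributes \(d!/d!=1\), and the \(h^{d-1}\) term contributes \(\tfrac{d(d+1)}{2}h^{d-1}/d!\). This needs a little care, because for large \(d\) the factor \(1/d!\) is small. The cleaner route is to bound the last summand, \(d!\,h\cdot\sum_j 1/j\) (coming from the products omitting one factor), by \(h\geq d!\), which already exceeds \(d-1\).

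More simply, I would write
\[
\prod_{i=1}^d(h+i)\geq h^d+h^{d-1}\sum_{i=1}^d i\geq h^d+d\,h^{d-1}.
\]
Dividing by \(d!\) leaves an excess of \(d\,h^{d-1}/d!\). Since \(h\geq d!\) (because \(L\geq1\)), we get \(h^{d-1}\geq h\geq d!\) when \(d\geq2\), so the excess is at least \(d>d-1\). This closes the inequality and is the step I would check most carefully.
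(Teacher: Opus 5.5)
Your proposal is correct and is essentially the paper's proof: bound $\mu_A(\n^h)$ by Lemma~\ref{lem:generators}, use the $h^{d-1}$-term of $(h+1)\cdots(h+d)$ together with $h\geq d!$ to get $h^{d-1}L+d-1<\binom{h+d}{d}$, and apply Fact~\ref{eq:ES} with $I=\n$. Your aside about the linear term is loosely phrased, but it is not needed, since the final estimate $\prod_{i=1}^d(h+i)\geq h^d+d\,h^{d-1}$ already gives the strict inequality.
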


\begin{proof}
	Set \(L=\ell_A(A/Q)\) and take \(h\geq d!L\). If \(d=1\),
	Lemma~\ref{lem:generators} gives
	\(\mu_A(\n^h)\leq L\leq h<h+1\). For \(d\geq2\), it gives
	\[
	\mu_A(\n^h)\leq Lh^{d-1}+d-1
	\leq\frac{h^d}{d!}+d-1
	<\binom{h+d}{d}.
	\]
	For the strict inequality, expand \((h+1)\cdots(h+d)/d!\).
	In addition to \(h^d/d!\), its \(h^{d-1}\) term alone is
	\(\frac{d(d+1)}{2d!}h^{d-1}>d-1\), since \(h\geq d!\).
	Now apply {Fact} \ref{eq:ES} with \(I=\n\).
\end{proof}

\subsection{A finite bound for sup $\{\rr(\p R_{\p})\}$}\label{sec:uniform2}

Here is the precise standard uniformity input used below.
Huneke's uniform Brian\c con--Skoda theorem implies that (see \cite{Huneke}).
\begin{fact}\label{eq:BS}
If \(U\) is a complete reduced Noetherian local ring, there is an
	integer \(b\geq0\) such that
$
		\overline{I^{b+1}}\subseteq I$
 {for every ideal } $I\subseteq U$.
\end{fact}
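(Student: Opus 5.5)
The plan is to deduce Fact~\ref{eq:BS} from two standard uniformity results, following Huneke's strategy in \cite{Huneke}: the Lipman--Sathaye Jacobian theorem and the uniform Artin--Rees lemma. I aim to prove the stronger assertion that there is a constant \(k\), depending only on \(U\), with \(\overline{I^{n+k}}\subseteq I^{n}\) for all ideals \(I\) and all \(n\geq1\). Taking \(n=1\) and \(b=k\) then gives the statement.

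\emph{Step 1 (a uniform multiplier).} Since \(U\) is complete, it is excellent, and it is reduced. By the Cohen structure theorem, \(U\) is module-finite over a complete regular local ring \(A\), and this extension can be chosen generically étale, i.e.\ separable at the minimal primes. In characteristic zero this is automatic. In positive characteristic one chooses \(A\) more carefully, or passes to the Jacobian ideal \(\mathcal J=\mathcal J(U/A)\) as in Lipman--Sathaye. Because \(U\) is reduced and the extension is generically étale, \(\mathcal J\) contains a nonzerodivisor \(c\). The Lipman--Sathaye theorem then gives
\[
c\,\overline{I^{\,n+d-1}}\subseteq I^{n}\quad\text{for all ideals }I\text{ and all }n\geq1,\qquad d=\dim U,
\]
using that the analytic spread of any ideal is at most \(d\). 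The essential feature is that \(c\) is independent of \(I\) and \(n\).

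\emph{Step 2 (uniform Artin--Rees).} Huneke's uniform Artin--Rees theorem applies to excellent reduced rings, in particular to complete local rings. For the fixed pair \(cU\subseteq U\), it provides an integer \(k_0\) such that
\[
I^{n}\cap cU\subseteq c\,I^{\,n-k_0}\quad\text{for all ideals }I\text{ and all }n>k_0.
\]
Since \(c\) is a nonzerodivisor, this is equivalent to \(I^{n}:c\subseteq I^{n-k_0}\).

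\emph{Step 3 (combine).} Step 1 gives \(\overline{I^{n+d-1}}\subseteq I^{n}:c\), and Step 2 then gives \(\overline{I^{n+d-1}}\subseteq I^{n-k_0}\). Now put \(n=k_0+1\). This yields \(\overline{I^{k_0+d}}\subseteq I\), so \(b=k_0+d-1\) works.

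The main obstacle is Step 1 in positive characteristic. A Noether normalization of a complete ring need not be separable, and Lipman--Sathaye requires a generically étale presentation, or at least a Jacobian ideal containing a nonzerodivisor. My first attempt would be to change coordinates in the Cohen presentation so that the fraction-field extension becomes separable, which is possible because \(U\) is reduced and the residue field can be handled by a \(p\)-basis argument. Failing that, I would quote Huneke's theorem directly for reduced rings essentially of finite type over an excellent local ring, which covers complete reduced local rings.
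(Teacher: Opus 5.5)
\textbf{Relation to the paper.} The paper does not prove this Fact. It quotes Huneke's uniform Brian\c{c}on--Skoda theorem and only observes that complete local rings are excellent. So your fallback, ``quote Huneke's theorem directly,'' is exactly the paper's route, although as a proof it is circular. Your reconstruction follows Huneke's own argument: first a uniform multiplier $c$, then uniform Artin--Rees for $cU\subseteq U$ to remove it.

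\textbf{Steps 2 and 3 are fine.} Huneke's uniform Artin--Rees theorem is stated under more specific hypotheses than ``excellent reduced,'' but complete local rings satisfy them. Given Steps 1 and 2, the bookkeeping $b=k_0+d-1$ is correct.

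\textbf{Gap 1: Step 1 fails for non-domains.} You assert that every complete reduced local ring is module-finite, torsion-free and generically \'etale over a single regular local ring. You then apply Lipman--Sathaye, which needs a domain (or at least a torsion-free, generically \'etale extension). The assertion is false in general.
\begin{itemize}
\item In $U=\mathbb Z_p[[x]]/(px)$ the minimal primes $pU$ and $xU$ have quotients $\mathbb F_p[[x]]$ and $\mathbb Z_p$. Any domain $A\subseteq U$ over which $U$ is integral meets both minimal primes in $0$, by comparing dimensions. So $A$ would embed in rings of characteristic $p$ and $0$, which is impossible.
\item For non-equidimensional rings such as $k[[x,y,z]]/(xy,xz)$, no Noether normalization is torsion-free.
\end{itemize}
The missing step is the reduction to components.
\begin{itemize}
\item Apply the domain case to each $U/P_i$. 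This gives $c_i\notin P_i$ with $c_i\,\overline{I^{n+d-1}}\subseteq I^n+P_i$, since integral dependence passes to $U/P_i$ and $\dim U/P_i\le d$.
\item Pick $e_i\in\bigcap_{j\ne i}P_j\setminus P_i$. Reducedness gives $e_iP_i\subseteq\bigcap_jP_j=0$.
\item Then $c=\sum_ie_ic_i$ satisfies $c\,\overline{I^{n+d-1}}\subseteq I^n$. It avoids every minimal prime, so it is a nonzerodivisor, as Step 2 requires.
\end{itemize}
You should also say explicitly that the bound via analytic spread $\le d$ needs an infinite residue field. So pass to $U(X)=U[X]_{\m U[X]}$ and contract back along this faithfully flat map.

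\textbf{Gap 2: separability in characteristic $p$.} For a domain of equal characteristic $p$, the existence of a generically separable Noether normalization is the real content of Step 1, and you only assert it. It does not come from choosing parameters alone. Take $R=k+x\,k'[[x]]$ with $k'=k(a^{1/p})$ and $a\in k\setminus k^p$.
\begin{itemize}
\item With the constant copy of $k$ as coefficient field, $\operatorname{Frac}R$ contains $a^{1/p}$, which does not lie in $k((z))$. Hence $\operatorname{Frac}R$ is inseparable over $k((z))$ for every parameter $z$.
\item Separability appears only after re-choosing the coefficient field through a different lift of $a$, for example $a+a^{1/p}x$.
\end{itemize}
So you must either carry out the $p$-basis argument in general, or replace Lipman--Sathaye in characteristic $p$ by a different source of the uniform multiplier. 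One option is the tight-closure Brian\c{c}on--Skoda theorem combined with the existence of test elements (Hochster--Huneke). Even then you must check that the test element comes from $U$ and survives the passage to $U(X)$. Without one of these, Step 1 in characteristic $p$ is unproved.
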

 
The theorem applies
because a complete Noetherian local ring is excellent. The same
\(b\) works after every localization of \(E\): ideals extend from
\(E\), and integral closure of ideals commutes with localization.
The bar denotes integral closure of an ideal.
The following item becomes more easier if we assume $R$ is reduced and excellent.
\begin{lemma}\label{lem:uniformC}
	For a Noetherian local ring \((R,\m)\) there is an integer \(C\geq1\)
	such that, for every \(\p\in\Spec R\) and every reduction \(J\) of
	\(\n=\p R_{\p}\),
$ \n^C\subseteq J.
$
\end{lemma}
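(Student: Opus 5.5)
The plan is to reduce to Fact~\ref{eq:BS} by passing to the completion and then killing the nilradical, taking \(C=(b+1)e\) for suitable constants \(b,e\) depending only on \(R\). The mechanism underlying everything is simple. Suppose \(J\) is a reduction of \(\n\), say \(\n^{a+1}=J\n^a\). Then every element of \(\n\) is integral over \(J\), so \(\n\subseteq\overline J\). Hence
\[
\n^{b+1}\subseteq \overline J^{\,b+1}\subseteq \overline{J^{b+1}},
\]
and a Brian\c con--Skoda containment \(\overline{J^{b+1}}\subseteq J\) would finish the argument with \(C=b+1\). The work consists in making such a containment available uniformly, even though \(R\) need not be complete or reduced.

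\textbf{Step 1 (passing to the completion).} Let \(U=\widehat R\), and let \(N\) be the nilradical of \(U\). Since \(U\) is Noetherian, \(N^e=0\) for some \(e\geq1\). The ring \(U/N\) is a complete reduced local ring, so Fact~\ref{eq:BS} gives an integer \(b\) that works for every ideal of \(U/N\). By the remark following Fact~\ref{eq:BS}, the same \(b\) works for every ideal of every localization \((U/N)_{P}\), because ideals extend from \(U/N\) and integral closure commutes with localization.

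Now fix \(\p\in\Spec R\) and a reduction \(J\) of \(\n=\p R_\p\). Because \(R\to U\) is faithfully flat, there is a prime \(P\) of \(U\) with \(P\cap R=\p\). The induced map \(R_\p\to U_P\) is then flat and local, hence faithfully flat. Note that we do not need \(P\) to be minimal over \(\p U\): it is enough that \(\p U_P\) is an ideal of \(U_P\) and \(JU_P\) is an ideal contained in it.

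\textbf{Step 2 (the reduced quotient).} Write \(V=U_P/NU_P\), which is a localization of \(U/N\). Extending the integrality relations from \(R_\p\), we get
\[
\p V\subseteq \overline{JV}.
\]
By the containment displayed at the start, combined with the choice of \(b\) in Step~1,
\[
\p^{b+1}V\subseteq \overline{(JV)^{b+1}}\subseteq JV.
\]
Lifting back to \(U_P\), this says
\[
\p^{b+1}U_P\subseteq JU_P+NU_P.
\]

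\textbf{Step 3 (removing the nilpotents).} Raise the last containment to the \(e\)-th power. In the expansion of \((JU_P+NU_P)^e\), every product with at least one factor from \(JU_P\) lies in \(JU_P\), and the single remaining term is \(N^eU_P=0\). Therefore
\[
\p^{(b+1)e}U_P\subseteq JU_P.
\]
Since \(R_\p\to U_P\) is faithfully flat, extended ideals contract back: \(IU_P\cap R_\p=I\) for every ideal \(I\) of \(R_\p\). Applying this to \(I=J\) yields \(\n^{C}\subseteq J\) with \(C=(b+1)e\). This constant depends only on \(R\), through \(b\) and \(e\), and not on \(\p\) or \(J\).

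\textbf{Main obstacle.} The delicate point is the uniformity of \(b\) across all localizations of \(U/N\), together with its compatibility with extension of ideals from \(R_\p\). The inclusion \(\p V\subseteq\overline{JV}\) is routine, since an integral dependence equation over \(J\) maps to one over \(JV\). What needs care is the claim that Huneke's uniform exponent, which is stated for ideals of the complete ring, still applies to ideals of the localization \(V\). I would check this by contracting: pull \(JV\) back to an ideal \(I\) of \(U/N\), so that \(IV=JV\). Since integral closure commutes with localization, \(\overline{(JV)^{b+1}}=\overline{I^{b+1}}\,V\), and Fact~\ref{eq:BS} gives \(\overline{I^{b+1}}\subseteq I\). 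Hence \(\overline{(JV)^{b+1}}\subseteq IV=JV\), as required.
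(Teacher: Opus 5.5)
Your proof is correct and is essentially the paper's argument. Both pass to $\widehat R$, apply Huneke's uniform Brian\c con--Skoda exponent $b$ to $\widehat R/N$ with $N^e=0$, raise the extended containment $\n^{b+1}\subseteq J+N$ to the $e$-th power, and contract along a faithfully flat map, giving $C=e(b+1)$. The only differences are cosmetic: you localize $\widehat R$ at a prime $P$ lying over $\p$, so that $R_\p\to\widehat R_P$ is local, whereas the paper inverts $R\setminus\p$ in $\widehat R$; and you spell out, via contraction of ideals, why $b$ still works in localizations of $\widehat R/N$, which the paper only asserts.
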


\begin{proof}
	Let \(T=\widehat R\), let \(N=\sqrt{(0)_T}\), and choose \(s\geq1\)
	with \(N^s=0\). Apply {Fact} \ref{eq:BS} to the complete reduced local
	ring \(E=T/N\), obtaining \(b\); set
	\(
	C=s(b+1).
	\)
	Fix \(\p\) and \(J\) as in the statement, put \(S=R\setminus\p\),
	and set \(U=S^{-1}T\). The map \(R_{\p}\longrightarrow U\) is
	faithfully flat, being a base change of \(R\longrightarrow T\).
	The reduction relation says that \(\n\) is integral over \(J\);
	this property persists in \(U\) (see \cite[Proposition 8.1.1]{HS}) and in \(U/NU=S^{-1}E\) (see \cite[Lemma 8.1.3~(3)]{HS}).
	Consequently, in \(U/NU\) the \((b+1)\)st power of the image of
	\(\n U\) is contained in the integral closure of the \((b+1)\)st
	power of the image of \(JU\), hence in the image of \(JU\) by
{Fact} \ref{eq:BS}. Thus
	\[
	(\n U)^{b+1}\subseteq JU+NU.
	\]
	Raising to the \(s\)th power yields
	\[
 \n ^CU\subseteq	(\n U)^C\subseteq(JU+NU)^s\subseteq JU+(NU)^s=JU.
	\]
	Contract along the faithfully flat map to obtain
the desired claim (see \cite[Theorem 7.5(ii)]{Mat}). 
\end{proof}

\begin{remark}\label{eq:edim}
We need a uniform embedding-dimension bound. By the Cohen
	structure theorem choose a surjection \(V\twoheadrightarrow T=\widehat R\)
	from a complete regular local ring \(V\), and write \(v=\dim V\).
	Then
$
		\edim(R_{\p})\leq v$ for all $\p\in\Spec R.$
	Indeed, choose a prime \(P\) of \(T\) over \(\p\). The map
	\(R_{\p}\longrightarrow T_P\) is flat and local. Lech's
	embedding-dimension inequality for flat local maps gives
	\(\edim(R_{\p})\leq\edim(T_P)\); see
	\cite{Lech}. If \(\widetilde P\) is
	the preimage of \(P\) in \(V\), then \(T_P\) is a quotient of the
	regular local ring \(V_{\widetilde P}\), whose dimension is at
	most \(v\). This proves the desired claim.
\end{remark}
  
  Our next main result is as follows.

\begin{theorem}\label{thm:finite}
	Let \((R,\m)\) be a Noetherian local ring of dimension \(D\geq1\).
	Choose \(C\) as in Lemma~\ref{lem:uniformC} and \(v\) as in
Remark	\ref{eq:edim}. Finally, let
	\[
	M:=\binom{v+C-1}{v},\qquad H:=D!M-1.
	\]
Then, for every prime \(\p\) for which \(\kappa(\p)\) is infinite, we have
$ 
		\rr(\p R_{\p})\leq H.
$ 
	In particular, the supremum in Theorem~\ref{thm:answers}(ii)
	is finite, for arbitrary residue field of \(R\).
\end{theorem}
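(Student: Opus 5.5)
Fix a prime \(\p\) with \(\kappa(\p)\) infinite, and set \(A=R_{\p}\), \(\n=\p R_{\p}\), \(d=\dim A=\operatorname{ht}\p\leq D\). The plan is to feed Corollary~\ref{cor:localbound} a parameter ideal \(Q\) whose colength is bounded uniformly in \(\p\).

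If \(d=0\), then \(\n\) is nilpotent and \(0\) is its unique minimal reduction. Applying Lemma~\ref{lem:uniformC} with \(J=0\) gives \(\n^C=0\), hence \(\rr(\n)\leq C-1\leq H\). So we may assume \(d\geq1\).

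Since \(\kappa(\p)\) is infinite, \(\n\) has a minimal reduction \(Q\) generated by \(\ell(\n)=d\) elements (\cite[Proposition 8.3.7]{HS}). Because \(\n\) is \(\n\)-primary, \(Q\) is a parameter ideal. Lemma~\ref{lem:uniformC} gives \(\n^C\subseteq Q\), so
\[
\ell_A(A/Q)\leq \ell_A(A/\n^C)=\sum_{i=0}^{C-1}\mu_A(\n^i).
\]
Let \(e=\edim A\). By Remark~\ref{eq:edim}, \(e\leq v\). The ideal \(\n^i\) is generated by the monomials of degree \(i\) in \(e\) generators, so \(\mu_A(\n^i)\leq\binom{e+i-1}{i}\leq\binom{v+i-1}{i}\). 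Summing over \(0\le i\le C-1\) by the hockey-stick identity gives \(\binom{v+C-1}{v}=M\). Corollary~\ref{cor:localbound} then yields \(\rr(\n)\leq d!\,M-1\leq D!\,M-1=H\), since \(d\leq D\).

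For the final assertion, the residue field of \(R\) may be finite, and primes with finite \(\kappa(\p)\) cannot be handled directly. The natural reduction is to pass to \(R'=R[X]_{\m R[X]}\). This extension is faithfully flat with \(\m R'\) maximal and has infinite residue field. For each \(\p\), the prime \(\p R'\) has residue field \(\kappa(\p)(X)\), which is infinite, and \(R_{\p}\to R'_{\p R'}\) is flat local with closed fibre a field.

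This transfer is where I expect the main obstacle. Applying the bound to \(R'\) (with its own \(C\), \(v\) and \(D\)) controls \(\rr(\p R'_{\p R'})\). Descending this to \(\rr(\p R_{\p})\) would need \(\rr\) not to increase under such an extension, and Proposition~\ref{e25} shows that it can. So I would first check whether the paper intends \(\sup\) over primes of this infinite-residue-field model as the meaning of ``arbitrary residue field''.

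Failing that, I would try to bound \(\rr\) directly over finite fields. By Remark~\ref{rem:existence}, minimal reductions exist in that case as well. I would attempt to show that some reduction with at most \(\mu_A(\n^h)\) generators satisfies \(\n^{h}=J\n^{h-1}\) for \(h\) large relative to the uniform colength bound. The colength estimate of Lemma~\ref{lem:generators} is still available, since it does not require an infinite residue field. The difficulty is that minimality of \(J\) forces a possibly different reduction number, and controlling that would need a finite-field substitute for Eakin--Sathaye.
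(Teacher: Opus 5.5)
Your argument for primes with infinite residue field is the paper's proof step for step. It covers the case $d=0$ via $\n^C=0$, uses a $d$-generated minimal reduction $Q$ with $\n^C\subseteq Q$ from Lemma~\ref{lem:uniformC}, counts $\ell_A(A/\n^C)\le\binom{v+C-1}{v}=M$ using $\edim A\le v$, and concludes with Corollary~\ref{cor:localbound}. The one detail to add is why $C-1\le H$. Since $V$ surjects onto $\widehat R$, one has $v\ge D\ge 1$, hence $M\ge\binom{C}{1}=C$.

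The genuine gap is the final assertion, which you leave unresolved, and the routes you propose for it are unnecessary. The missing idea is elementary: in a local ring, the maximal ideal is the only prime that can have a finite residue field. If $\kappa(\p)$ is finite, then $R/\p\subseteq\kappa(\p)$ is a finite integral domain, hence a field, so $\p$ is maximal and $\p=\m$. Therefore your first part already gives $\rr(\p R_{\p})\le H$ for every $\p\ne\m$, and also for $\m$ when $R/\m$ is infinite. So the supremum is at most $\max\{H,\rr(\m)\}$. The single leftover term is finite because minimal reductions of $\m$ exist with no hypothesis on the residue field (Remark~\ref{rem:existence}), and any reduction $J$ has $r_J(\m)<\infty$ by definition. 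The statement claims only that the supremum is finite, not that $\rr(\m)\le H$. Hence no descent from $R[X]_{\m R[X]}$ is needed (Proposition~\ref{e25} indeed warns against it, as you note), and no finite-field replacement for Eakin--Sathaye is required.
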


\begin{proof}
	Fix such a \(\p\), and set \((A,\n)=(R_{\p},\p R_{\p})\)
	and \(d=\dim A\). If \(d=0\), \(A\) is Artinian and the unique
	minimal reduction of \(\n\) is zero. Lemma~\ref{lem:uniformC}
	therefore gives \(\n^C=0\) and \(\rr(\n)\leq C-1\).
	
	Suppose \(d\geq1\). Since the residue field of \(A\) is infinite,
	\(\n\) has a minimal reduction \(Q\) generated by \(d\) elements;
	it is a parameter ideal. By {Lemma} \ref{lem:uniformC},
	\(\n^C\subseteq Q\). Since \(\mu_A(\n)\leq v\) (see Remark	\ref{eq:edim}), the graded
	pieces of the \(\n\)-adic filtration satisfy
	\[
	\ell_A(\n^i/\n^{i+1})\leq\binom{v+i-1}{i}.
	\]
	Summing these inequalities, with the degree-zero term equal
	to one, gives
$$
		L:=\ell_A(A/Q)
		\leq\ell_A(A/\n^C)
		\leq\sum_{i=0}^{C-1}\binom{v+i-1}{i}
		=\binom{v+C-1}{v}=M.
$$
	Corollary~\ref{cor:localbound} now implies
	\(\rr(\n)\leq d!L-1\leq D!M-1=H\).
	The dimension-zero case also satisfies this bound, because
	\(v\geq1\) and hence \(M\geq C\).
	
	Finally, every nonmaximal prime of a local ring has infinite
	residue field. Otherwise \(R/\p\), which embeds into its finite
	fraction field \(\kappa(\p)\), would be a finite domain and thus
	a field; this would make \(\p\) maximal. If \(R/\m\) is infinite,
$
\rr(\p R_{\p})\leq H.
$ If it is finite,
	only \(\m\) is omitted, and Remark~\ref{rem:existence} gives
	\(\rr(\m)<\infty\). In that case the supremum is at most
	\(\max\{H,\rr(\m)\}\).
\end{proof}

If \(\dim R=0\), then \(\Spec R=\{\m\}\) and the assertion is
immediate from nilpotence of \(\m\). This completes the proof
of Theorem~\ref{thm:answers}(ii).

\subsection{An infinite supremum without noetherianity}
\label{sec:nonnoetherian}

 \begin{definition}
 	Let \(k\) be any field, \(D = k[[t]]\), \(K = k((t))\), and for \(n \ge 1\) set
$A_n = D[u]/(u^{n+1}).$
 	Inside \(\prod_{n\ge1} A_n\), define
 	\(
 	B = D\cdot\mathbf 1 + \bigoplus_{n\ge1} A_n. 
 	\)\end{definition}
 	Thus an element of \(B\) is a sequence eventually equal to a single element of \(D\). Write \(e_n\) for the idempotent supported at coordinate \(n\).
 	\begin{notation}
 	Set	$R = k\cdot\mathbf 1 + tB.$
 		\end{notation}
 	 The diagonal element \(t\) is a nonzerodivisor in \(B\) and in \(R\). \begin{remark}For this section only, allow non-Noetherian rings, and use the same definitions of reduction and minimal reduction whenever they exist.\end{remark}

 \begin{observation}
 	The ring \(R\) is quasi-local and non-Noetherian, with maximal ideal \(\m := tB\).
 \end{observation}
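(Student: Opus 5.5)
We need to show that $R=k\cdot\mathbf 1+tB$ is quasi-local with maximal ideal $\m=tB$, and that $R$ is not Noetherian.

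\textbf{Step 1: $\m$ is an ideal and $R=k\oplus\m$.} Since $tB\cdot tB\subseteq tB$ and $k\cdot tB\subseteq tB$, the set $tB$ is an ideal of $R$. The map $R\to k$, $c\mathbf 1+tb\mapsto c$, is well defined: if $c\mathbf 1\in tB$ then $c=0$, because the eventual constant coordinate of an element of $tB$ lies in $tD$. This map is a surjective ring homomorphism with kernel $tB$. So $tB$ is a maximal ideal and $R/\m\cong k$.

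\textbf{Step 2: every element outside $\m$ is a unit.} Let $r=c\mathbf 1+tb$ with $c\in k^\times$. It suffices to show that $1+t b'$ is a unit of $R$ for every $b'\in B$, where $b'=c^{-1}b$. The plan is to write $(1+tb')^{-1}=\sum_{j\ge0}(-1)^jt^jb'^j$ and check that this sum makes sense coordinatewise. In each coordinate $A_n=D[u]/(u^{n+1})$, the element $1+tb'_n$ lies in $1+tA_n$ and is a unit, since $A_n$ is $t$-adically complete. In the eventual constant coordinate $D=k[[t]]$, it is likewise a unit of the form $1+t(\cdot)$. So the inverse exists in $\prod A_n$; it is eventually equal to the inverse of the constant coordinate, which lies in $D$; and it lies in $B$. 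Moreover it has the form $1+t\beta$ with $\beta\in B$: each coordinate inverse is $1-tb'_n(1+tb'_n)^{-1}$, with $\beta_n=-b'_n(1+tb'_n)^{-1}$, and $\beta$ is eventually constant in $D$. Hence the inverse lies in $k\mathbf1+tB=R$. This gives quasi-locality. This step is the main bookkeeping point: one must make sure the inverse really stays in $R$, not merely in $B$.

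\textbf{Step 3: non-Noetherianity.} The idea is to exhibit a strictly ascending chain, namely $I_m=(te_1,\dots,te_m)R$. Each $te_n$ lies in $tB\subseteq R$. To see that $te_{m+1}\notin I_m$, observe that every element of $I_m$ is supported on coordinates $1,\dots,m$, because $e_n$ is idempotent with support $\{n\}$ and $R$-multiples preserve that support. On the other hand $te_{m+1}\neq0$ in coordinate $m+1$. So the chain is strictly increasing.

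An alternative route is to argue that $\m$ is not finitely generated. Suppose $\m$ were generated by finitely many elements $tb^{(1)},\dots,tb^{(s)}$. These lie in $D\cdot t\mathbf 1+\bigoplus_{n\le n_0}tA_n$, and one can compare $\m/\m^2$, which would be infinite-dimensional because the $te_n$ are independent there. The chain argument is simpler, so it is the one to use.
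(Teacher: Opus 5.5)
Your proof is correct. For quasi-locality, Steps 1 and 2 follow essentially the same route as the paper. The paper also inverts $c+tb$ coordinatewise and notes that the inverse sequence is eventually the inverse of a single unit of $D$. It then checks that $(c+tb)^{-1}-c^{-1}\in tB$, which is your observation that the inverse has the form $1+t\beta$ with $\beta\in B$ (after scaling by $c^{-1}$).

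The difference is in non-Noetherianity. The paper uses the alternative you sketched and set aside. Since $t$ is a nonzerodivisor on $B$ and $\m^2=t^2B$, multiplication by $t$ gives $\m/\m^2\cong B/tB$. In $B/tB$ the classes of the $e_n$ are $k$-linearly independent, so $\dim_k \m/\m^2=\infty$ and the maximal ideal itself is not finitely generated.

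Your strictly ascending chain $(te_1,\dots,te_m)R$ is the more elementary argument. It uses only that multiplication in $\prod_n A_n$ is coordinatewise, so every element of the $m$th ideal is supported on $\{1,\dots,m\}$ while $te_{m+1}$ is not. It needs neither the computation of $\m^2$ nor the fact that $t$ is a nonzerodivisor on $B$.

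The paper's route gives a sharper conclusion: the maximal ideal has infinite embedding dimension, not merely that some ideal of $R$ fails to be finitely generated. That is the natural obstruction to Noetherianity for a quasi-local ring. Either argument suffices for the statement as given.
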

 
 \begin{proof}
 	Every element of \(R\) has the form \(c + tb\) with \(c \in k\) and \(b \in B\). If \(c \ne 0\), each coordinate is a unit in \(A_n\), and its inverse sequence is eventually the inverse of a single unit of \(D\). Write \(c+tb = c(1 + c^{-1}tb)\). For any element \(x\) since \((1+x)\bigl(1 - x(1+x)^{-1}\bigr) = (1+x) - x = 1\), we have
$	(1+x)^{-1} = 1 - x(1+x)^{-1}.$ Applying this with \(x = c^{-1}tb\) and multiplying by \(c^{-1}\)  gives
 	\[
 	(c+tb)^{-1} = c^{-1}\bigl(1 - c^{-1}tb\,(1+c^{-1}tb)^{-1}\bigr)
 	= c^{-1} - c^{-2}tb\,(1+c^{-1}tb)^{-1}.
 	\]
 	Using \((1+c^{-1}tb)^{-1} = c\,(c+tb)^{-1}\), the second term becomes \(c^{-1}tb\,(c+tb)^{-1}\), so
$
 	(c+tb)^{-1} = c^{-1} - c^{-1}tb\,(c+tb)^{-1},
$
 	i.e.\ \((c+tb)^{-1} - c^{-1} = -t\,c^{-1}\,b\,(c+tb)^{-1}\). Thus \((c+tb)^{-1} \in B\), and
 $ 
 		(c+tb)^{-1} - c^{-1} = -t\,c^{-1}\,b\,(c+tb)^{-1} \in tB.
$ 
 	So \(c+tb\) is a unit of \(R\). Since \(R/tB \cong k\), this proves that \(R\) is quasi-local with maximal ideal \(\m = tB\).
For non-Noetherianity: multiplication by \(t\) identifies
 $\m/\m^2 \cong B/tB$
 	as \(k\)-vector spaces. The classes of the \(e_n\) are linearly independent in \(B/tB\), so this space is infinite-dimensional. Hence \(\m\) is not finitely generated and \(R\) is not Noetherian.
 \end{proof}

 \begin{lemma}
 	The primes of \(R\) consist of \(\m\), primes \(\p_n\) for \(n \ge 1\), and one additional prime \(\p_\infty\). So,  \(\dim R = 1\). Their localizations satisfy
$
 	R_{\p_n} \cong K[u]/(u^{n+1})$, and $R_{\p_\infty} \cong K.
$
 \end{lemma}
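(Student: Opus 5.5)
We will classify the primes of \(R\) by working through the ring \(B\). Since \(R\subseteq B\) and \(tB\subseteq R\), the element \(t\) satisfies \(tB\subseteq R\), so \(R[1/t]=B[1/t]\). This is the key simplification: every prime of \(R\) not containing \(t\) corresponds to a prime of \(B[1/t]\), and every prime containing \(t\) contains \(\m=tB\) only if it contains \(tB\). The plan is therefore to split \(\Spec R\) according to whether \(t\in\p\).

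\emph{Primes containing \(t\).} If \(t\in\p\), then for any \(b\in B\) we have \((tb)^2=t\cdot(tb^2)\in tR\subseteq\p\), since \(tb^2\in tB\subseteq R\). Thus \(tb\in\p\), so \(\m=tB\subseteq\p\), and hence \(\p=\m\).

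\emph{Primes not containing \(t\).} These correspond bijectively to primes of
\[
B[1/t]=K\cdot\mathbf 1+\bigoplus_{n\ge1}K[u]/(u^{n+1}),
\]
the ring of sequences with \(n\)-th entry in \(K[u]/(u^{n+1})\) that are eventually a constant in \(K\). Here the main step is computing \(\Spec\) of this ring. Let \(I=\bigoplus_n K[u]/(u^{n+1})\), an ideal with \(B[1/t]/I\cong K\). A prime \(P\) either contains \(I\), giving the single prime \(\p_\infty\) with quotient \(K\), or it fails to contain some \(e_n\). In the latter case \(e_n\notin P\) and \(e_n(1-e_n)=0\) force \(1-e_n\in P\), so \(P\) corresponds to a prime of \(e_nB[1/t]\cong K[u]/(u^{n+1})\), whose only prime is \((u)\). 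This gives \(\p_n\), the contraction of \((1-e_n)+(ue_n)\). The \(e_n\) for different \(n\) are distinct, so each such \(P\) is determined by a unique \(n\).

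Localizations then follow from \(R_\p=B[1/t]_{P}\) for \(t\notin\p\). Localizing at \(P_n\) inverts \(e_n\) and kills \(1-e_n\), giving \(K[u]/(u^{n+1})\). At \(\p_\infty\), every \(e_n\) lies in \(P\), and \(1-e_n\notin P\) kills \(e_n\) after localization; hence the whole nilpotent part dies and \(R_{\p_\infty}\cong B[1/t]/I\cong K\).

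For the dimension claim, all \(\p_n\) and \(\p_\infty\) are minimal, being primes of zero-dimensional localizations that are pairwise incomparable (each is maximal in \(B[1/t]\)), and each is strictly contained in \(\m\). Hence \(\dim R=1\).

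The main obstacle is the prime classification of \(B[1/t]\), specifically handling the non-finitely-supported behavior at \(\p_\infty\). The idempotent argument sketched above should settle it. The only remaining care is checking that \(\p\mapsto\p B[1/t]\) is a bijection, which holds because it is a localization of \(R\).
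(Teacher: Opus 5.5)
Your proposal is correct and follows essentially the same route as the paper: pass to \(C=R[1/t]=B[1/t]\), classify \(\Spec C\) via the idempotents \(e_n\), and dispose of primes containing \(t\) via \((tb)^2\in tR\), i.e.\ \(\m^2\subseteq tR\). In \(C\), a prime omitting \(e_n\) contains \(1-e_n\) and is the prime \((u)\) of the \(n\)th factor, while a prime containing all \(e_n\) is the direct-sum ideal, with localization \(K\). Two cosmetic corrections: uniqueness of \(n\) comes from \(e_me_n=0\) for \(m\neq n\), not merely from the \(e_n\) being distinct. Also, the direct-sum ideal is not nilpotent; it vanishes after localizing at \(\p_\infty\) because it is generated by the \(e_n\), each of which is killed by the unit \(1-e_n\).
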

 
 \begin{proof}
 	Inverting \(t\) yields
 	\(
 	C := R[1/t] = B[1/t] = K\cdot\mathbf 1 + \bigoplus_{n\ge1} K[u]/(u^{n+1}).
 	\)
 	Let \(F\) be the direct-sum ideal in this expression. If a prime \(P\) of \(C\) omits some \(e_n\), then localization forces \(e_n = 1\) and \(1 - e_n = 0\). It therefore selects the \(n\)th factor, whose unique prime is \((u)\). Let \(P_n\) denote this prime of \(C\). Then
 	\(
 	C_{P_n} \cong K[u]/(u^{n+1}).
 	\)
 	If \(P\) contains every \(e_n\), it contains \(F\); since \(C/F = K\), it must be \(F\) itself. Every element of \(F\) has finite support and is annihilated by \(1 - \sum_{n\in E} e_n\) for a suitable finite set \(E\). This annihilator lies outside \(F\), so \(C_F = K\).
 	
 	Contracting \(P_n\) and \(F\) to \(R\) defines \(\p_n\) and \(\p_\infty\), respectively, and proves the asserted localization formulas. All these primes omit \(t\) and lie strictly below \(\m\). Conversely, any prime of \(R\) containing \(t\) equals \(\m\), because \(\m^2 \subseteq tR\). We have therefore listed every prime. The primes of \(C\) are all maximal, so \(\dim R = 1\).
 \end{proof}

 \begin{corollary}
 	The reduction numbers are
$
 	\rr(\m) = 1$, $\rr(\p_n R_{\p_n}) = n$ and $\rr(\p_\infty R_{\p_\infty}) = 0.
 $
 	In particular, every displayed reduction number is finite and their supremum is infinite.
 \end{corollary}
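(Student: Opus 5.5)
The three claims can be read off from the localization lemma just proved. That lemma gives \(R_{\p_n}\cong K[u]/(u^{n+1})\) and \(R_{\p_\infty}\cong K\), so only \(\rr(\m)\) requires work in the non-Noetherian ring \(R\).

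\emph{The Artinian localizations.} In \(A=K[u]/(u^{n+1})\) the maximal ideal \((u)\) is nilpotent. Any reduction \(J\) of \((u)\) satisfies \((u)^{a+1}=J(u)^a\) for some \(a\). If \(J\) were nonzero, then \(J\subseteq(u)\) would force \((u)^{a+1}\subseteq (u)^{a+1}\) with \(J(u)^a\) pushing strictly deeper, which iterates to \((u)^a=0\). This would make \(J=0\) a reduction anyway. So the zero ideal is a reduction, and it is the unique minimal one. Then \(\rr\bigl((u)\bigr)\) is the least \(a\) with \((u)^{a+1}=0\), namely \(a=n\), since \(u^n\neq0=u^{n+1}\). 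For \(K\) the maximal ideal is \(0\), so the reduction number is \(0\).

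\emph{The maximal ideal.} First show \(\m^2=t\m\). Since \(\m=tB\), we get \(\m^2=t^2B\cdot B=t^2B\), because \(B\) contains \(1\). On the other hand \(t\m=t\cdot tB=t^2B\). Hence \(\m^2=t\m\), so \((t)=tR\) is a reduction with \(r_{(t)}(\m)\le1\). Next show that \((t)\) is a minimal reduction, by mirroring the argument of Proposition~\ref{prop:small}. If \(J\subseteq tR\) is a reduction, write \(J=tI\) with \(I\) an ideal of \(R\); this works because \(t\) is a nonzerodivisor, so \(I=\{r: tr\in J\}\). From \(\m^{a+1}=t\m^a=tI\m^a\) and cancellation of \(t\) we get \(\m^a=I\m^a\). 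Now Nakayama is unavailable, since \(\m^a=t^aB\) is not finitely generated. Instead, argue directly. If \(I\subseteq\m\), then \(\m^a=\m^{a+1}\), i.e.\ \(t^aB=t^{a+1}B\). This gives \(1=tb\) in \(B\), and projecting to any coordinate \(A_n\), where \(t\) is a nonunit, yields a contradiction. Hence \(I=R\) and \(J=(t)\). Finally \(\rr(\m)\neq0\): a minimal reduction with reduction number \(0\) would equal \(\m\), but \((t)\subsetneq\m\) is a reduction, so \(\m\) is not minimal. Indeed \(\m\neq tR\), because \(te_1\in\m\) while \(e_1\notin R\). The minimal reduction \((t)\) then realizes \(\rr(\m)=1\).

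The delicate point is that, without Noetherianity, one must check that every minimal reduction has reduction number \(\ge1\) and that \((t)\) is genuinely minimal without appealing to Nakayama. This is handled by the explicit computation \(\m^a=t^aB\) and the observation that \(t\) is not invertible in \(B\). The supremum over \(n\) of \(\rr(\p_nR_{\p_n})=n\) is then infinite, while every individual value is finite.
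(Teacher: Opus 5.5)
Your proof is correct and follows the paper's argument. You compute $\m^2=t^2B=t\m$, write a reduction $J\subseteq tR$ as $tI$, cancel $t$ inside $B$, and reduce minimality of $tR$ to $tB\neq B$. The paper cancels $t^{a+1}$ to get $IB=B$; you get $\m^a=I\m^a$ and derive $B=tB$ from $I\subseteq\m$, which rests on the same point. Both proofs read the localizations off the preceding lemma.

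The one blemish is the sentence about $J(u)^a$ ``pushing strictly deeper'', which is not a coherent argument and is unnecessary. The zero ideal is a reduction simply because $(u)^{n+1}=0=0\cdot(u)^n$, and it lies in every ideal, so it is the unique minimal reduction.
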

 
 \begin{proof}
 	We have \(\m^2 = t^2B = t\m\), and \(\m \ne tR\) because \(te_n \notin tR\). We verify that \(tR\) is a minimal reduction without using a finite-generation assumption. If \(J \subseteq tR\) is a reduction of \(\m\), write \(J = tI\) with \(I\) an ideal of \(R\). For some \(a \ge 1\),
$
 	t^{a+1}B = \m^{a+1} = J\m^a = t^{a+1}IB.
 $
 	Cancellation inside \(B\) gives \(IB = B\). A proper ideal \(I\) of the local ring \(R\) lies in \(tB\), which would force \(IB \subseteq tB \ne B\). Hence \(I = R\) and \(J = tR\). Thus \(\rr(\m) = 1\), since reduction number zero would force \(\m\) itself to be minimal.
 	
 	At \(\p_n\) the maximal ideal is \((u)\), with nonzero \(n\)th power and zero \((n+1)\)st power. Its unique minimal reduction is zero and its reduction number is \(n\). At \(\p_\infty\) the maximal ideal is zero and the reduction number is zero. Hence all displayed reduction numbers are finite, while their supremum over \(n\) is infinite.
 \end{proof}

\end{document}